\numberwithin{equation}{section}
\theoremstyle{plain}
\newtheorem{theorem}{Theorem}[section]
\newtheorem{corollary}[theorem]{Corollary}
\newtheorem{lemma}[theorem]{Lemma}
\newtheorem{proposition}[theorem]{Proposition}
\theoremstyle{definition}
\newtheorem{definition}[theorem]{Definition}
\newtheorem{example}[theorem]{Example}
\theoremstyle{remark}
\newcommand{\Q}{\mathbb{Q}}
\newcommand{\Z}{\mathbb{Z}}
\newcommand{\C}{\mathbb{C}}
\newcommand{\vol}{\operatorname{vol}}
\newcommand{\tr}{\operatorname{tr}}
\newcommand{\Gal}{\operatorname{Gal}}
\newcommand{\ff}{\operatorname{ if }}
\newcommand{\CM}{\mathcal{CM}}
\newcommand{\Ind}{\operatorname{Ind}}
\newcommand{\OO}{\mathcal O}
\font\cute=cmitt10 at 12pt 
\newcommand{\kay}{{\text{\cute k}}}
\newcommand{\Fal}{\operatorname{Fal}}
\begin{document}


\title[CM fields of Dihedral type and the Colmez conjecture]{CM fields of Dihedral type and  the Colmez conjecture}
\author[ Tonghai Yang and Hongbo Yin]{Tonghai Yang and Hongbo Yin}

\address{Department of Mathematics, University of Wisconsin Madison,
Van Vleck Hall, Madison, WI 53706, USA}
\email{thyang@math.wisc.edu}
\address{Academy of Mathematics and Systems Science, Morningside center of Mathematics, Chinese Academy of Sciences, Beijing 100190}
\email{yinhb@math.ac.cn}


\begin{abstract}  In this paper, we consider some CM fields which we call of dihedral type and compute the Artin $L$-functions associated to all CM types of these CM fields. As a consequence of this calculation, we see that the Colmez conjecture in this case is very closely related to understanding the log derivatives of certain Hecke characters of real quadratic fields. Recall that the `abelian case' of the Colmez conjecture, proved by Colmez himself, amounts to understanding the log derivatives of  Hecke characters of $\Q$ (cyclotomic characters). In this paper, we also prove that the Colmez conjecture holds for `unitary CM types of signature $(n-1, 1)$' and  holds on average for `unitary CM types of a fixed CM number field of  signature $(n-r, r)$'.
\end{abstract}

\keywords{CM number fields,  CM types, $L$-function, the  Colmez  Conjecture}

\maketitle

\section{Introduction}

In his influential work in 1993 \cite{Colmez}, Colmez discovered a conjectural deep and precise relation between the geometry (Faltings height) and analysis (log derivatives of $L$-functions)   of CM types, which is a vast generalization of the well-known Chowla-Selberg formula and Kronecker limit formula. The average version was recently used by Tsimerman to prove the Andre-Oort conjecture \cite{Tsimerman}. The average Colmez conjecture was then proved independently by Yuan-Zhang \cite{YZ15} and Andreatta, Goren, Howard, and Madapusi Pera \cite{AGHMP}.  In addition to the average case, known cases include the abelian case (\cite{Colmez}, \cite{Obus}), the quartic case (\cite{Yang10-AJM}, \cite{YaGeneral}), and  the unitary case of signature $(n, 1)$ (\cite{BHKRY}). In all known cases, the analytic side is known  and involves simple $L$-functions in a simple way. In general, the analytic side is quite mysterious and is very hard to understand. In this paper, we focus on a non-trivial special case and completely unfold the analytic side. It   sheds some light on  what kind of  $L$-functions are involved. Now let's describe the main results in a little more detail and set up some basic notation. More notation and  an introduction to the Colmez conjecture will be given in Section \ref{sect:ColmezReview}.

Let $F$ be a totally real number field of degree $n$, $\kay$ be an  imaginary quadratic field (viewed as a subfield of $\C$), and $E=F\kay$. Let $\{\tau_i\}_{i\in \Z/n}$ be the embeddings  of $F$ into $\C$. For each subset $S\subset\Z/n$ of order $0 \le r \le n$, there is a unique CM type
$\Phi_S=\{ \sigma_i:\, i\in \Z/n\}$ of $ E$ such that $\sigma_i|_F=\tau_i$ and $\sigma_i|\kay $ is the identity embedding if and only if $i\notin S$. We will say $\Phi_S$ is of signature $(n-r, r)$. In this paper, we will prove the following theorem.

\begin{theorem} \label{theo:Average1} (1) \quad When $|S|=0, 1$, $n-1$ or $n$,  the Colmez conjecture (\ref{eq:Colmez2})  holds for $\Phi_S$.

(2) \quad For any $2 \le r \le n-2$, the Colmez conjecture (\ref{eq:Colmez2}) holds for the average of all CM types of $E$ of signature $(n-r, r)$.
\end{theorem}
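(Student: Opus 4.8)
The plan is to play the two already-available facts about the Colmez conjecture for $E$ --- the abelian case \cite{Colmez} and the average over all $2^n$ CM types \cite{YZ15,AGHMP} --- against a rigidity phenomenon special to $E=F\kay$: after averaging over all subsets $S$ of a fixed size $r$, the entire Colmez ``defect'' of $\Phi_S$ depends on $r$ only through a single explicit rational scalar. For the setup, identify $\Hom(E,\C)=\Hom(F,\C)\times\{1,\iota\}$, with $\iota$ the complex conjugation of $\kay$, so that $\Phi_S=\{(\tau_i,1):i\notin S\}\cup\{(\tau_i,\iota):i\in S\}$ and $\overline{\Phi_S}=\Phi_{S^c}$. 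Let $\eta$ be the quadratic character of $\Gal(\kay/\Q)$ and let $\phi_S:=\mathds{1}_{S^c}-\mathds{1}_S$, a $\{\pm1\}$-valued function on $\Hom(F,\C)$; a direct check gives the clean identity
\[
\mathds{1}_{\Phi_S}-\mathds{1}_{\overline{\Phi_S}}=\eta\otimes\phi_S .
\]
Colmez's virtual character $A^0_\Phi$ is, by construction, an even quadratic expression in $\mathds{1}_\Phi-\mathds{1}_{\overline\Phi}$ together with a term depending only on the fixed function $\mathds{1}_\Phi+\mathds{1}_{\overline\Phi}$. Hence for $E=F\kay$ we get $A^0_{\Phi_S}=A^0_{\mathrm{fix}}+Q(\phi_S)$, where $A^0_{\mathrm{fix}}$ is independent of $S$ and $Q(\phi_S)$ depends on $S$ only $\R$-linearly through the second moment $\phi_S\otimes\phi_S$ on $\Hom(F,\C)^2$ (the $\eta\otimes\eta$ factor being $S$-independent, and all cross terms vanishing by orthogonality of the two characters of $\Gal(\kay/\Q)$).

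Next, a one-line count shows that for $i\ne j$
\[
\frac{1}{\binom{n}{r}}\sum_{|S|=r}\phi_S(\tau_i)\phi_S(\tau_j)=1-\frac{4r(n-r)}{n(n-1)}=:c_r,
\]
while the diagonal values are identically $1$; thus $\frac{1}{\binom{n}{r}}\sum_{|S|=r}\phi_S\otimes\phi_S=c_r(\mathds{1}\otimes\mathds{1})+(1-c_r)\Delta$, with $\mathds{1}$ the all-ones function and $\Delta$ the diagonal. Combined with the previous paragraph, $\frac{1}{\binom{n}{r}}\sum_{|S|=r}A^0_{\Phi_S}=C+c_rV$ for virtual characters $C,V$ independent of $r$. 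By \cite{Colmez}, both sides of (\ref{eq:Colmez2}) are affine functions of $A^0_\Phi$ (an $\R$-linear functional of $A^0_\Phi$ plus a constant depending only on $E$), so the defect $D(\Phi)$ (the left side of (\ref{eq:Colmez2}) minus the right side) is affine in $A^0_\Phi$; therefore
\[
D_r:=\frac{1}{\binom{n}{r}}\sum_{|S|=r}D(\Phi_S)=A+c_rB
\]
for real constants $A,B$ independent of $r$.

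It remains to feed in the two known cases. First, $\sum_{r}\binom{n}{r}r(n-r)=n(n-1)2^{n-2}$, so $\frac{1}{2^n}\sum_r\binom{n}{r}c_r=0$; the average Colmez conjecture \cite{YZ15,AGHMP}, which amounts to $\frac{1}{2^n}\sum_r\binom{n}{r}D_r=0$, therefore forces $A=0$. Second, $\Phi_\emptyset$ is the CM type of $E$ induced from the CM type of the imaginary quadratic field $\kay$, so the associated CM abelian variety is isogenous to a power of a CM elliptic curve with multiplication by $\kay$, and (\ref{eq:Colmez2}) for $\Phi_\emptyset$ reduces to the abelian case, i.e.\ the Chowla--Selberg formula \cite{Colmez}; since $c_0=1$ this says $D_0=A+B=0$, hence $B=0$. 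Thus $D_r=0$ for every $0\le r\le n$ --- which is part (2), and also the cases $|S|\in\{0,n\}$ of part (1). For $|S|\in\{1,n-1\}$ one extra observation completes the proof: since $E/\kay$ is a field, $\Gal(\overline{\Q}/\kay)$ acts transitively on $\Hom_\kay(E,\overline{\Q})=\Hom(F,\overline{\Q})$, so all CM types of $E$ of signature $(n-1,1)$ lie in a single $\Aut(\C)$-orbit; as the Faltings height and the character $A^0_\Phi$ (hence $D$) are invariant under this Galois action, $D(\Phi_{\{i\}})=D_1=0$ for each $i$, and likewise in signature $(1,n-1)$.

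I expect the main obstacle to lie in the first step --- the explicit identification of $A^0_{\Phi_S}$ for $E=F\kay$ and the verification of its dependence on $S$. The guiding principle is simple (the splitting $\mathds{1}_{\Phi_S}-\mathds{1}_{\overline{\Phi_S}}=\eta\otimes\phi_S$ together with orthogonality over $\Gal(\kay/\Q)$), but carrying Colmez's group-algebra construction through correctly --- tracking cosets, the central complex conjugation, and the precise normalization of $A^0$ --- is delicate, and is in effect the content of the explicit Artin $L$-function computations that occupy much of the paper. One should also check that ``average Colmez'' and the abelian case are being quoted in exactly the normalization in which $D(\Phi)$ is affine in $A^0_\Phi$, and that the reduction of $\Phi_\emptyset$ to $\kay$ is compatible with Colmez's functionals. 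As an alternative to the last step, part (1) in signature $(n-1,1)$ can instead be obtained by matching the explicit $L$-function of $\Phi_{\{i\}}$ against the Faltings-height formula of \cite{BHKRY}.
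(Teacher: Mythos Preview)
Your argument is correct, and it is genuinely different from the paper's. The paper proves the stronger Theorem \ref{theo:Average} by a direct group-ring computation: it writes out $A_S$ explicitly, sums over all $S$ with $|S|=r$, and recognizes the result as a specific linear combination of three concrete class functions, namely $\tr_{E^c/\kay}$, $(1-\rho)\tr_{E^c/\kay}$, and the induced character $f_{E/F}$ of $\chi_{E/F}$; each of these is a character for which (\ref{eq:Colmez}) is already known (the first two by the abelian case, the third by Proposition \ref{prop:AverageColmez}, which repackages the average Colmez theorem). This yields not just $D_r=0$ but the explicit $L$-function formula for $\sum_{|S|=r} h_{\Fal}(\Phi_S)$.

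Your route is more structural: you observe that $\Phi_S-\overline{\Phi_S}=\eta\otimes\phi_S$ factors, so the cross terms in $\Phi\tilde\Phi$ vanish and $A_S^0$ depends on $S$ only through the symmetric second moment $\phi_S\otimes\phi_S$; averaging that over $|S|=r$ gives a one-parameter family in $c_r=1-\tfrac{4r(n-r)}{n(n-1)}$, and then the two known data points (Chowla--Selberg at $r=0$, where $c_0=1$, and the full average, where $\sum_r\binom{n}{r}c_r=0$) pin the defect to zero. This is cleaner and avoids identifying $f_{E/F}$, but it does not produce the explicit formula in Theorem \ref{theo:Average}, which is part of the paper's point. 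Your worry that ``carrying Colmez's group-algebra construction through correctly\ldots is delicate'' is in fact unfounded here: since $G^c=G\times\langle\rho\rangle$, the vanishing of the cross terms is exactly the orthogonality $\sum_a\eta(\rho^a)=0$, and the quadratic term is visibly $\eta\otimes(\text{bilinear in }\phi_S)$; the projection to class functions is linear and causes no trouble. Your handling of $|S|\in\{0,1,n-1,n\}$ matches the paper's.
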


 As a corollary, we show in Corollary \ref{cor:SymmetricGroup} that if the Galois closure $F^c$ of $F$ has the symmetric group $S_n$ or alternating group $A_n$  as its Galois group over $\Q$, then the Colmez conjecture (\ref{eq:Colmez2}) holds for $E$, i.e., for every CM type of $E$. We mention that Barquero-Sanchez and  Masri \cite{BSM} have proved a very nice density theorem for CM number fields satisfying the Colmez conjecture   using the average Colmez conjecture and  analytic tools.

For the rest of this introduction, let $F^c$ be the Galois closure of $F$ and assume that $\Gal(F^c/\Q) =D_{2n}$ is a dihedral group. The main purpose of this paper is to compute the class function $A_S^0=A_{\Phi_S}^0$ for  every CM type $\Phi_S$ of $E$ and relate it  to characters, and  then compute the associated log derivative $Z(s, A_S^0)$ in terms of  known Hecke class characters. Both $A_S^0$ and $Z(s, A_S^0)$ are essential in the Colmez conjecture and will be defined in Section \ref{sect:ColmezReview}. The main results are Theorems \ref{theo:ZetaOdd} and \ref{theo:ZetaEven}. As a consequence, we will prove in Section \ref{sect:Colmez} the following theorem.

\begin{theorem} \label{theo1.2} Assume that $\Gal(F^c/\Q) \cong D_{2n}=\langle \sigma, \tau:\,  \sigma^n=\tau^2=1, \sigma \tau =\tau \sigma^{-1}\rangle$ with $n=[F:\Q]$. Let  $E^c=F^c\kay$, and let  $\kay_\tau=(F^c)^\sigma$ be the real quadratic subfield of $F^c$ fixed by $\sigma$.  The following are equivalent.
\begin{enumerate}
\item The Colmez conjecture (\ref{eq:Colmez}) holds for  all  class characters $\nu_k$ of the real quadratic field $\kay_\tau$,  $1 \le k \le \frac{n-1}2$, whose associated Artin characters are
     $$
     \nu_k: \Gal(E^c/\kay_\tau)\rightarrow \C^\times, \quad \nu_k(\sigma)=e^{\frac{2 \pi i k}n}, \nu_k(\rho) =-1.
     $$
     Here $\rho$ is the complex conjugation on $E^c$.

  \item The Colmez conjecture   (\ref{eq:Colmez2}) holds for all CM types $\Phi_S$ of $E$.

\item The Colmez conjecture (\ref{eq:Colmez2}) holds for  all CM types $\Phi_S$ with $|S|=2$.
\end{enumerate}
\end{theorem}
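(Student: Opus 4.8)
The plan is to deduce the equivalence from the explicit evaluations of $A_S^0$ and $Z(s,A_S^0)$ carried out in Theorems~\ref{theo:ZetaOdd} and~\ref{theo:ZetaEven}, combined with the structural fact that the Colmez conjecture is linear in the class function. Recall from Section~\ref{sect:ColmezReview} that~(\ref{eq:Colmez2}) for a CM type $\Phi_S$ asserts an identity between a Faltings-height quantity attached to $\Phi_S$ and a log-derivative quantity built from $Z(s,A_S^0)$, and that both sides are values of $\Q$-linear (indeed $\C$-linear) functionals of the class function $A_S^0$; for a single Artin character $\chi$ in place of $A_S^0$ this identity is exactly~(\ref{eq:Colmez}). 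Hence if $A_S^0=\sum_j c_j\,\chi_j$ with $c_j\in\Q$ and each $\chi_j$ an Artin character for which~(\ref{eq:Colmez}) is known, then~(\ref{eq:Colmez2}) holds for $\Phi_S$; conversely, known instances of~(\ref{eq:Colmez2}) yield known instances of~(\ref{eq:Colmez}) for any $\Q$-combination of the corresponding $A_S^0$'s.

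Next I would invoke Theorems~\ref{theo:ZetaOdd} and~\ref{theo:ZetaEven}: they express each $A_S^0$ as an explicit $\Q$-linear combination of (i) Artin characters of $\Q$, for which~(\ref{eq:Colmez}) is Colmez's own abelian (cyclotomic) result, and (ii) the one-dimensional characters $\nu_k$ of $\Gal(E^c/\kay_\tau)$, $1\le k\le\frac{n-1}{2}$, with $\nu_k(\sigma)=e^{2\pi ik/n}$ and $\nu_k(\rho)=-1$. Since the height and log-derivative functionals are both compatible with induction of Artin characters, treating $\nu_k$ as a Hecke class character of $\kay_\tau$ or as $\Ind_{\kay_\tau}^{\Q}\nu_k$ makes no difference for~(\ref{eq:Colmez}). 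As the ``$\Q$-part'' of $A_S^0$ automatically satisfies the conjecture, the whole content of~(\ref{eq:Colmez2}) for $\Phi_S$ is carried by the $\nu_k$-coefficients.

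Given this, (1)$\Rightarrow$(2) follows at once: assuming~(\ref{eq:Colmez}) for each $\nu_k$ and, by Colmez, for all abelian characters of $\Q$, linearity and the decomposition give~(\ref{eq:Colmez2}) for every $\Phi_S$. The implication (2)$\Rightarrow$(3) is trivial, the $|S|=2$ types being a subfamily. The substance is in (3)$\Rightarrow$(1): from the explicit formulas of Theorems~\ref{theo:ZetaOdd}/\ref{theo:ZetaEven} for $A_S^0$ with $|S|=2$, I would show that, modulo the $\Q$-span of the Artin characters of $\Q$, the virtual characters $\{A_S^0:|S|=2\}$ span the space generated by $\{\nu_k:1\le k\le\frac{n-1}{2}\}$ --- indeed that each individual $\nu_k$ is a $\Q$-combination of such $A_S^0$'s and abelian characters. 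Then (3) together with the abelian case of Colmez gives~(\ref{eq:Colmez}) for each such combination, hence for each $\nu_k$, which is (1). The spanning assertion is a finite linear-algebra statement: after writing $A_S^0$ (for $|S|=2$) in terms of the characters $\chi_k\colon\sigma\mapsto e^{2\pi ik/n}$ of $\langle\sigma\rangle\cong\Z/n$, twisted by $\rho\mapsto-1$, the coefficient matrix has Vandermonde/character-sum shape and one must check that its rank is maximal.

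The main obstacle I expect is exactly this non-degeneracy in (3)$\Rightarrow$(1). There are $\binom{n}{2}$ class functions $A_S^0$ with $|S|=2$ and only about $\tfrac{n}{2}$ characters $\nu_k$, so there is ample room; the point is to confirm that the precise combinations produced by the computation --- whose shape depends on the parity of $n$, which is exactly why Theorems~\ref{theo:ZetaOdd} and~\ref{theo:ZetaEven} are stated separately --- really do separate the $\nu_k$. Carefully handling the two parity cases, the role of the complex conjugation $\rho$, and any small-$n$ degeneracies is the crux; the other implications are formal, resting only on linearity and Colmez's theorem in the abelian case.
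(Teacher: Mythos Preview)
Your overall architecture---linearity of $h$ and $Z$, explicit decomposition from Theorems~\ref{theo:ZetaOdd} and~\ref{theo:ZetaEven}, then a nondegeneracy check for $(3)\Rightarrow(1)$---is exactly the paper's. But there is a real gap in your identification of the ``known'' inputs. You assert that Theorems~\ref{theo:ZetaOdd} and~\ref{theo:ZetaEven} express $A_S^0$ as a combination of (i) Artin characters of $\Q$ handled by Colmez's abelian theorem and (ii) the $\nu_k$. That is not what those theorems say: besides $\zeta_\kay$ and $\chi_{\kay/\Q}$, they contain $\chi_{E/F}$ (odd case) and $\chi_{E^c/F^c}$ (even case). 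These are quadratic Hecke characters of $F$ and $F^c$, and their inductions to $\Q$ are \emph{not} sums of Dirichlet characters; in fact each decomposes into one Dirichlet character plus precisely the two-dimensional characters $\chi_j\psi_\rho$, i.e.\ the inductions of the $\nu_j$ themselves. So Colmez's abelian (cyclotomic) theorem does not dispose of them. The paper instead invokes Proposition~\ref{prop:AverageColmez}, the reformulation of the proved Average Colmez Conjecture, which asserts~(\ref{eq:Colmez}) for every quadratic CM character $\chi_{E/F}$; this is the input you are missing.

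This matters for $(3)\Rightarrow(1)$ as well. Because the paper treats $\chi_{E/F}$ (resp.\ $\chi_{E^c/F^c}$) as a single term and cancels it via Proposition~\ref{prop:AverageColmez}, the residual linear system in the unknowns $x_k=h(\Ind\,\nu_k)-Z(0,\nu_k)$ has the clean coefficient matrix $(\zeta_n^{ik}+\zeta_n^{-ik})_{1\le i,k<n/2}$, coming from $a_j(\{0,i\})=\delta_{i,j}$; its invertibility is immediate from the character table. If instead you fully decompose $\chi_{E/F}$ into irreducibles as your plan implicitly requires, each $\nu_k$-coefficient in $A_{\{0,i\}}^0$ acquires an extra constant $r/n^2$ from the $\chi_{E/F}$ term, and your matrix becomes $\bigl(1+\cos(2\pi ik/n)\bigr)$. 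This is still invertible, but you would have to argue it separately, and it is not the ``Vandermonde'' matrix you allude to. Either fix works; the cleanest is simply to replace your appeal to Colmez's abelian theorem by Proposition~\ref{prop:AverageColmez}, after which your sketch coincides with the paper's proof.
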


In short, the Colmez conjecture for $E$ amounts to understanding the log derivatives of some simple (CM) Hecke class characters of the real quadratic field $\kay_\tau$ in terms of geometry. This is very similar to understanding Jacobi sum characters via the Fermat curves (or vice versa).

The paper is organized as follows. In Section \ref{sect:ColmezReview}, we set up notation about CM types, class functions, and review  Colmez conjectures (\ref{eq:Colmez}) and (\ref{eq:Colmez2}). We also give a simple reinterpretation of the proved Average Colmez Conjecture. In Section \ref{sect:Dihedral}, we collect and prove some basic facts about CM types and class functions related to the CM number field $E=F\kay$ assuming that $\Gal(F^c/\Q)\cong D_{2n}$. In Section  \ref{sect:ClassFunction}, we do the main calculation for $A_S^0$ and $Z(s, A_S^0)$. To make the exposition clearer, we divide it into two cases depending on whether $n$ is odd or even. In the last section, we prove the above theorems and also discuss the Colmez conjectures for CM number field of low degrees.

\textbf{Acknowledgment.}  We thank  J. Fresan for sharing his idea to use the average Colmez conjecture to derive the Colmez conjecture  with one of the authors (T.H.). We thank the anonymous referee, Ben Howard, and Solly Patenti for making the exposition more readable and correcting numerous English errors.  Part of the work was done when  both of us visited the Morningside Center of Mathematics during summer 2016. We thank the MCM  for providing us with excellent working condition and thank Ye Tian for all his help.

\section{The Colmez Conjecture} \label{sect:ColmezReview}

In this section, we give a brief description of the Colmez conjecture  and known results after setting up notation. We also record an observation which will be used later in this paper and should be of independent interest. We refer to \cite{Colmez} and \cite{Yang10-CJM} for more detail on the Colmez conjecture.

  We fix an embedding $\bar\Q \subset \C$, and let $\Q^{\CM}$ be the union of all CM number fields in $\bar\Q$, and  let  $G^{\CM}=\Gal(\Q^{\CM}/\Q)$. It has a unique complex conjugation $\rho$ which  is in the center of $G^{\CM}$. Let $H(G^\CM)$ be the  Hecke algebra of $G^\CM$, i.e., the ring  (without identity) of locally constant functions $f$ on $G^\CM$ with values in $\C$ and  multiplication given by  the convolution:
    \begin{equation}
       f_1*f_2 (g) =  \int_{G^\CM}f_1(h)f_2(h^{-1} g) dh.
       \end{equation}
Here we require $\vol(G^{\CM})=1$. Let $H^0(G^\CM)$ be the subring of  $H(G^{\CM})$ of locally constant class functions  $f$ on $G^\CM$ ( i.e., $f(g) =f(hgh^{-1})$).  It is well-known that $H^0(G^\CM)$ has a $\C$-basis of Artin characters(characters of Gaois representations).
For a class function $A=\sum a_\chi \chi \in H^0(G^\CM)$ expressed as the linear combination of Artin characters, we define
$$
Z(s, A)=\sum a_\chi Z(s, \chi),  \quad Z(s, \chi) =\frac{L'(s, \chi)}{L(s,\chi)} +\frac{1}2 \log f_{Art}(\chi),
$$
where $f_{Art}(\chi)$ is  the Artin conductor of $\chi$.

Let $H^{00}(G^{\CM})$ be the subalgebra of $H^0(G^\CM)$ of class functions $A$ such that $A(g) + A(\rho g)$ is independent of $g \in G^{\CM}$. According to \cite[Chapter I, Section 3]{Tatebook}, $H^{00}(G^{\CM})$ has a $\C$-basis  consisting of the Artin characters $\chi=\chi_\rho$ of Artin
representations of $G^{\CM}$ such that $L(0, \chi)\ne 0$. In  particular,  for a class function $A\in  H^{00}(G^\CM)$, $Z(0, A)$ is well-defined.

On the geometric side, a CM type $\Phi$ is a function on $G^{\CM}$ with values in $\{0,1\}$ such that $\Phi(g) +\Phi(\rho g) =1$. There is a CM number field $E$ such that $\Phi(g) =1$ for all $g \in G_E^{\CM}=\Gal(\Q^{\CM}/E)$. In such a case,
$$
\{ \sigma: E\hookrightarrow \bar\Q\mid\,  \Phi(g) =1 \hbox{ for some } g \in G^{\CM} \hbox{ with } g|E=\sigma\}
$$
is a CM type of $E$ in the classical sense, which we also denote by $\Phi$. When we need to distinguish the field $E$, we will write $\Phi_E$ instead of simply $\Phi$. One can  easily associate a CM type function $\Phi$ to a classical CM type of $E$  in the obvious way. We will not distinguish the two and often  write it as a formal sum
$$
\Phi=\sum_{\sigma \in \Phi} \sigma.
$$
We may assume that $E$ is Galois over $\Q$ because of the above identification. Let
$$
\tilde\Phi=\sum_{\sigma \in \Phi} \sigma^{-1}, \quad A_{\Phi}=\frac{1}{[E:\Q]} \Phi \tilde\Phi =\sum a_g g ,
$$
and let
\begin{equation} \label{eq:A0}
A_\Phi^0=\frac{1}{|G|} \sum_{g \in G } gA_\Phi g^{-1} = \sum a_g^0 g, \quad a_g^0 = \frac{1}{|[g]|} \sum_{g_1 \in [g]} a_{g_1}
\end{equation}
be the projection of $A_\Phi$ onto $H^0(\Gal(E/\Q))\subset H^0(G^\CM)$, where $[g]$ is the conjugacy class of $g$ in $\Gal(E/\Q)$. Notice that $A_\Phi^0\in H^{00}(G^{\CM})$.  In terms of functions, one has $$
A_\Phi= \Phi * \Phi^\vee,   \quad \Phi^\vee(g) =\Phi(g^{-1}),
$$
and the projection is given by
$$
f^0(g) = \int_{G^\CM} f(h g h^{-1})dh
$$
for a locally constant  function $f \in H(G^{\CM})$.

On the other hand, let $(A, \iota)$ be a CM abelian variety of CM type $(\OO_E, \Phi)$ (with $\OO_E$-action), where $\OO_E$ is the ring of integers of $E$. Then there is a number field $L$ over which $(A, \iota)$ is defined and has good reduction everywhere. Let $\mathcal A$ be the Neron model of $A$ over $\OO_L$, and let
$\omega_{\mathcal A/\OO_L} = \wedge^n \Omega_{\mathcal A/\OO_L}$ which is an invertible $\OO_L$-module. Here $n=\frac{1}2 [E:\Q]$.  Let $0\ne \alpha \in  \omega_{\mathcal A/\OO_L}$, then  the Faltings height of $A$ is defined to be (the normalization here is the same as in \cite{Colmez} but different from \cite{Yang10-CJM})
$$
h_{\Fal}(A) = -\frac{1}{2[L:\mathbb Q]}
\sum_{\sigma: L \hookrightarrow \mathbb C}
 \log \left|\int_{\sigma(A)(\mathbb C)} \sigma(\alpha) \wedge
\overline{\sigma(\alpha)}\right| + \log |\omega_{\mathcal A/\OO_L}/\OO_L\alpha |.
$$
Colmez proved in \cite{Colmez} that
$$
h_{\Fal}(\Phi) = \frac{1}{[E:\Q]} h_{\Fal}(A)
$$
is not only independent of the choice of $A$ but also independent of the choice of $E$,  i.e., it  depends  only on the CM type $\Phi$ as a function. It is also clear that $h_{\Fal}(\Phi) =h_{\Fal}(\sigma \Phi)$ for any $\sigma \in \Gal(\bar\Q/\Q)$.

A key result of Colmez in \cite[Theorem 0.3]{Colmez} is that there is a unique linear function on $H^{00}(G^{\CM})$, called the height function $h$, such that
\begin{equation}\label{eq:Colmez1}
h_{\Fal}(\Phi) = -h(A_{\Phi}^0).
\end{equation}
 The Colmez conjecture asserts that for every class function $ f \in H^{00}(G^\CM)$, one has
 \begin{equation} \label{eq:Colmez}
 h(f) = Z(0, f^\vee).
 \end{equation}
 We remark that for a CM type $\Phi$, $(A_\Phi^0)^\vee=A_\Phi^0$. In particular, one should have
 \begin{equation}\label{eq:Colmez2}
 h_{\Fal}(\Phi) = - Z(0, A_{\Phi}^0).
 \end{equation}
Colmez proved his conjecture (\ref{eq:Colmez}) up to a multiple of $\log 2$  when $f$ is a Dirichlet character. The $\log 2$ part  was then  taken care  of by Obus \cite{Obus}. In other words, they proved the Colmez conjecture (\ref{eq:Colmez2}) when the CM field is abelian over $\Q$.  One of the authors proved (\ref{eq:Colmez2}) for quartic fields under some minor condition---the first non-abelian case \cite{Yang10-AJM}, \cite{YaGeneral}---using arithmetic intersection theory. The average version of (\ref{eq:Colmez2}) was recently proved independently in \cite{AGHMP} and \cite{YZ15}. It asserts: For a CM number field $E$ of degree $2n$ with maximal totally real subfield $F$,  one has
\begin{equation} \label{eq:AverageColmez}
\frac{1}{2^n} \sum_{\substack{\mathrm{all\ CM\ types\ of\ E}}} h_{\Fal}(\Phi) = -\frac{1}{4n} Z(0, \chi_{E/F})-\frac{1}{4} \log (2\pi),
\end{equation}
where $\chi_{E/F}$ is the quadratic Hecke character of $F$ associated to the quadratic extension $E/F$. This theorem implies that  the Colmez conjecture (\ref{eq:Colmez}) holds for all quadratic Hecke characters associated to all CM number fields. We state it as the following proposition.

\begin{proposition} \label{prop:AverageColmez}   Let $E$ be a CM number field with maximal totally real subfield $F$, and let $\chi_{E/F}$ be the Hecke character of $F$ associated to $E/F$ via the class field theory. Then the Colmez conjecture (\ref{eq:Colmez}) holds for $\chi_{E/F}$.  More precisely, view $\chi_{E/F}$ as a character of $G_F=\Gal(\Q^{\CM}/F)$ which factors through $\Gal(E/F)$, and let $f_{E/F}$ be the character of the induced representation $\Ind_{G_F}^{G^{\CM}} \chi_{E/F}$.
 Then the Colmez conjecture (\ref{eq:Colmez}) holds for $f_{E/F}$:
$$
h(f_{E/F}) = Z(0, f_{E/F}).
$$
\end{proposition}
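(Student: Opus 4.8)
The plan is to deduce this proposition from the average Colmez conjecture (\ref{eq:AverageColmez}) by an elementary representation-theoretic manipulation, matching both sides of the asserted identity $h(f_{E/F}) = Z(0, f_{E/F})$ against the two sides of (\ref{eq:AverageColmez}). First I would unwind the left-hand side: by Colmez's characterization (\ref{eq:Colmez1}), $h$ is the unique linear functional on $H^{00}(G^{\CM})$ with $h_{\Fal}(\Phi) = -h(A_\Phi^0)$, so by linearity $\sum_\Phi h_{\Fal}(\Phi) = -h\bigl(\sum_\Phi A_\Phi^0\bigr)$, the sum running over all $2^n$ CM types of $E$. The key computation is therefore to identify the class function $\sum_\Phi A_\Phi^0$ explicitly. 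Working with CM types as $\{0,1\}$-valued functions on $G^{\CM}$ (equivalently on $\Gal(E/\Q)$, since we may take $E$ Galois), one has $A_\Phi = \Phi * \Phi^\vee$, and summing over all CM types of $E$ — which are exactly the functions $\Phi$ on $\Gal(E/\Q)$ with $\Phi(g)+\Phi(\rho g)=1$ — produces, after expanding the convolution and using that for each coset $\{g,\rho g\}$ exactly one of the two values is $1$, a class function supported on $G_F = \Gal(\Q^{\CM}/F)$ that is a multiple of $\chi_{E/F}$ plus a multiple of the trivial character. A short bookkeeping argument (counting, for fixed $g$, the number of CM types $\Phi$ with $\Phi(h)=\Phi(h^{-1}g)=1$) should give $\sum_\Phi A_\Phi^0 = c_1 f_{E/F} + c_2 \mathbf 1$ for explicit constants $c_1, c_2$ depending only on $n$; here $f_{E/F}$ is the character of $\Ind_{G_F}^{G^{\CM}}\chi_{E/F}$ as in the statement, and $\mathbf 1$ is the trivial character, which satisfies $Z(0,\mathbf 1) = -\frac14\log\pi$ type contribution tracked via the zeta function of $\Q$.

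Next I would compute the right-hand side. By definition $Z(0, f_{E/F}) = \frac{L'(0,\chi_{E/F})}{L(0,\chi_{E/F})} + \frac12\log f_{\mathrm{Art}}(\chi_{E/F})$, which is exactly $Z(0,\chi_{E/F})$ viewed over $F$ (inductivity of Artin $L$-functions identifies the $L$-function and conductor of the induced representation with those of $\chi_{E/F}$ over $F$, up to the conductor-discriminant contribution which is the source of the $\log$-of-discriminant terms). Comparing with (\ref{eq:AverageColmez}), which reads $\frac{1}{2^n}\sum_\Phi h_{\Fal}(\Phi) = -\frac{1}{4n}Z(0,\chi_{E/F}) - \frac14\log(2\pi)$, and using that the Colmez conjecture is already known for the trivial character (it is abelian, hence covered by Colmez–Obus, and indeed $h(\mathbf 1) = Z(0,\mathbf 1)$ is essentially the Lerch/Chowla–Selberg content), I would solve for $h(f_{E/F})$: the trivial-character piece $c_2\mathbf 1$ contributes the $\log(2\pi)$ and the $\log\pi$ pieces in a way that must be checked to cancel correctly against the constant on the analytic side, and what remains is precisely $c_1$ times $Z(0,\chi_{E/F})$, forcing $h(f_{E/F}) = Z(0, f_{E/F})$ after dividing out $c_1$.

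The main obstacle, and the step deserving genuine care, is the explicit evaluation of the constants in $\sum_\Phi A_\Phi^0 = c_1 f_{E/F} + c_2\mathbf 1$ together with the precise normalization of the archimedean/constant terms. The combinatorics of summing the convolution $\Phi * \Phi^\vee$ over all CM types is not hard in itself, but one must be scrupulous about: (i) the normalization $\vol(G^{\CM})=1$ and the factor $\frac{1}{[E:\Q]}$ in the definition of $A_\Phi$; (ii) the passage between $A_\Phi$ and its projection $A_\Phi^0$, i.e. that the sum $\sum_\Phi A_\Phi$ is already a class function so the projection is harmless; and (iii) matching the $-\frac14\log(2\pi)$ in (\ref{eq:AverageColmez}) against the contribution of the trivial character to $Z(0,\cdot)$, where the archimedean $\Gamma$-factors and the $\frac12\log$-conductor convention must be reconciled. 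Once the constants are pinned down the proposition is immediate; conversely, this computation is exactly the ``simple reinterpretation of the proved Average Colmez Conjecture'' promised in the introduction, so I expect the argument to be short modulo this bookkeeping.
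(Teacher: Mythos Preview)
Your proposal is correct and follows essentially the same route as the paper: one cites the identity $\frac{1}{2^n}\sum_\Phi A_\Phi^0 = \frac{1}{4}\mathds{1}_{G^{\CM}} + \frac{1}{4n} f_{E/F}$ (this is \cite[(9.3.1)]{AGHMP} in the paper's normalization), applies $h$ and the average Colmez identity (\ref{eq:AverageColmez}), and uses $h(\mathds{1}_{G^{\CM}}) = Z(0,\mathds{1}_{G^{\CM}}) = \zeta'(0)/\zeta(0) = \log(2\pi)$ to cancel the constant, leaving $h(f_{E/F}) = Z(0, f_{E/F})$. The only correction to your sketch is that no $\Gamma$-factors enter the paper's $Z(s,\chi)$, so the bookkeeping in step (iii) is simply the evaluation $\zeta'(0)/\zeta(0)=\log(2\pi)$.
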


For simplicity, we will call the Hecke characters $\chi_{E/F}$    quadratic CM characters.  The proposition asserts then that the Conjecture (2.4) holds for all quadratic CM characters.

 \begin{proof} Direct calculation (see for example \cite[(9.3.1)]{AGHMP}) gives
 $$
 \frac{1}{2^n} \sum_{\substack{\mathrm{all\ CM\ types\ of\ E}}} A_{\Phi}^0 =\frac{1}4 \mathds{1}_{G^\CM} + \frac1{4n} f_{E/F}.
 $$
 Notice that our $A_{\Phi}^0$ is $\frac{1}{[E:\Q]}a_{E,\Phi}^0$ in their notation. Here $\mathds{1}_G$ stands for the characteristic function of $G$ as a subset of $G^{\CM}$. So (\ref{eq:AverageColmez}) and (\ref{eq:Colmez1}) imply that
 $$
 \frac{1}4 h(\mathds{1}_{G^\CM}) + \frac{1}{4n} h(f_{E/F}) = \frac{1}{4n} Z(0, f_{E/F})+ \frac{1}{4} \log (2\pi).
 $$
 On the other hand,  since $\mathds{1}_{G^\CM}$ corresponds to the trivial representation of $G^{\CM}$, one has
 $$
 h(\mathds{1}_{G^\CM})= \frac{\zeta'(0)}{\zeta(0)} =\log (2 \pi).
 $$
 Therefore,
 $$
 h(f_{E/F})= Z(0, f_{E/F}).
 $$

 \end{proof}

We remark that when $F$ is Galois over $\Q$ (equivalently $E$ is Galois over $\Q$), one has
$$
f_{E/F}(g) =\begin{cases}
 n\chi_{E/F}(g)  &\ff g|F =1,
 \\
 0 &\ff g|F \ne 1.
 \end{cases}
$$
That is $f_{E/F} =n(1-\rho)$ where $n=[F:\Q]$.

\section{CM number fields of Dihedral type}  \label{sect:Dihedral}

Let $F$ be a totally real number field of degree $n$ such that its Galois closure $F^c$ (over $\Q$) is of Dihedral  type $D_{2n}$, i.e.,
$$
\Gal(F^c/\Q) = \langle  \sigma, \tau:\, \sigma^n =1, \tau^2=1, \tau \sigma \tau =\sigma^{-1} \rangle,
$$
and that $\tau$ fixes $F$.   In particular, $\{ \sigma^i:\, i \in \Z/ n\}$ gives all the real embeddings of $F$. Let $\kay$ be an imaginary quadratic field, and let $E=F\kay$ be the associated CM number field. Then the Galois closure of $E$ is $E^c=F^c\kay$. Notice that $F^c$ is totally real and $E^c$ is a Galois CM number field.    Let $\rho$ be the complex conjugation of  $E^c$. Then $\rho \in \Gal(E^c/\Q)$ and it  commutes with $\sigma$ and $\tau$, where we extend $\sigma$ and $\tau$ to $E^c$ by acting trivially on $\kay$.  Let $G^c=\Gal(E^c/\Q)$, $G=\Gal(E^c/\kay)\cong \Gal(F^c/\Q)$, and $H=\Gal(E^c/(E^c)^\sigma) \cong \Gal(F^c/(F^c)^\sigma)=\langle \sigma \rangle$. Then
$$
G^c=\langle \sigma, \tau, \rho \rangle = G \times \langle \rho \rangle.
$$

For $g=\rho, \tau, \rho\tau$, we will denote $\kay_g$ for the quadratic subfield of $E^c$ such that $\Gal(\kay_g/\Q) =\langle g \rangle$.  So $\kay_\rho=\kay$, and $\kay_\tau=(F^c)^\sigma$. The following diagram is useful in tracking various subfields of $E^c$ used  in this paper.

$$\xymatrix{&&E^c\ar[dll]\ar@{.>}[d]\ar@{.>}[ddr]\ar[drr]&& \\ E\ar[dd]\ar[dr]&&F^c\ar@{.>}[dl]\ar@{.>}[dd]&&(E^c)^{\rho\tau}\ar[dlll]
\ar[dd]\\ &F\ar[dd]&&(E^c)^\sigma\ar@{.>}[dr]\ar@{.>}[dlll]\ar@{.>}[dl]&\\
\kay_\rho=\kay\ar[dr]&&\kay_\tau=(F^c)^{\sigma}\ar@{.>}[dl]&&\kay_{\rho\tau}=(E^c)^{\langle\sigma,\rho\tau\rangle}\ar[dlll]\\ &\Q&&&}$$

\begin{lemma} For a subset $S$ of $\Z/n$, let
$$
\Phi_S=\{ \sigma^i \rho:\, i \in S\} \cup \{ \sigma^i:\,   i \notin S\} .
$$
Then $S \mapsto \Phi_S$ gives a one-to-one correspondence  between the subsets of $\Z/n$ and  CM types of $E$.
\end{lemma}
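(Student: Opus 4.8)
The plan is to verify two things: first, that each $\Phi_S$ as defined really is a CM type of $E$ in the sense set up in Section~\ref{sect:ColmezReview} (i.e.\ that $\Phi_S$ takes values in $\{0,1\}$ on $G^\CM$ and satisfies $\Phi_S(g)+\Phi_S(\rho g)=1$, and that it is constant on $G_E^\CM$), and second, that the assignment $S\mapsto\Phi_S$ is a bijection. Both reduce to elementary group theory once we pin down the restriction maps among $G^c$, $G$ and $\Gal(E/\Q)$.

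First I would unwind the structure: since $E^c=F^c\kay$ and $F^c\cap\kay=\Q$ (as $F^c$ is totally real and $\kay$ is imaginary quadratic), we have $G^c=\Gal(E^c/\Q)=G\times\langle\rho\rangle$ with $G=\Gal(E^c/\kay)\cong\Gal(F^c/\Q)=D_{2n}=\langle\sigma,\tau\rangle$, exactly as stated in the text. Now $E=F\kay$ and $F=(F^c)^{\langle\tau\rangle}$, so under the identification $\Gal(E/\Q)=G^c/\Gal(E^c/E)$ the subgroup $\Gal(E^c/E)$ corresponds to $\langle\tau\rangle\subset G$ (acting trivially on $\kay$). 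A complete set of coset representatives for $\langle\tau\rangle$ in $G^c$ is then $\{\sigma^i,\ \sigma^i\rho : i\in\Z/n\}$, because these $2n$ elements are pairwise inequivalent mod $\langle\tau\rangle$ (using $\sigma\tau=\tau\sigma^{-1}$ one checks $\sigma^i\langle\tau\rangle=\sigma^j\langle\tau\rangle$ forces $i=j$ in $\Z/n$, and the $\rho$-component distinguishes the two families). This is the list of the $2n$ embeddings $E\hookrightarrow\bar\Q$, and $\sigma^i\rho$ versus $\sigma^i$ are exactly the pair of embeddings of $E$ lying over the real embedding $\tau_i=\sigma^i|_F$ of $F$, differing by complex conjugation.

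Next I would check the CM-type axioms for $\Phi_S$. The set $\Phi_S$ picks, for each $i\in\Z/n$, exactly one of the two embeddings over $\tau_i$: namely $\sigma^i\rho$ if $i\in S$ and $\sigma^i$ if $i\notin S$. Hence as a $\{0,1\}$-valued function on the set of embeddings of $E$ it satisfies $\Phi_S(\iota)+\Phi_S(\bar\iota)=1$; pulling back along $G^\CM\twoheadrightarrow\Gal(E/\Q)$ and noting $\rho$ is the complex conjugation gives $\Phi_S(g)+\Phi_S(\rho g)=1$ for all $g\in G^\CM$, and $\Phi_S$ is by construction constant (equal to the value at the identity coset) on $G_E^\CM$. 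Thus $\Phi_S$ is a genuine CM type of $E$. Conversely, given any CM type $\Phi$ of $E$, for each $i$ it contains precisely one of $\sigma^i,\sigma^i\rho$; setting $S=\{i : \sigma^i\rho\in\Phi\}$ recovers $\Phi=\Phi_S$, and distinct subsets $S$ give distinct $\Phi_S$ since they differ on some embedding over some $\tau_i$. This gives the asserted bijection.

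The only point requiring a little care — and the one I would treat as the main obstacle — is the bookkeeping showing that $\{\sigma^i,\sigma^i\rho\}_{i\in\Z/n}$ is a faithful set of coset representatives for $\Gal(E^c/E)=\langle\tau\rangle$ in $G^c$, equivalently that $\sigma^i|_F$ for $i\in\Z/n$ are the $n$ distinct real embeddings of $F$ and that each extends to $E$ in exactly two ways distinguished by $\rho$. This is where the dihedral relation $\tau\sigma\tau=\sigma^{-1}$ and the disjointness $F^c\cap\kay=\Q$ get used; everything else is formal.
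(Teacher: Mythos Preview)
Your proof is correct. The paper states this lemma without proof, treating the bijection as immediate from the definitions once one knows that $\{\sigma^i|_F: i\in\Z/n\}$ gives all real embeddings of $F$ and that $G^c=G\times\langle\rho\rangle$; your argument is exactly the standard verification one would write to fill in these details, and there is nothing to compare.
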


We call a CM type $\Phi$ of signature $(n-r, r)$ if $\Phi=\Phi_S$ with $|S|=r$. Two CM types $\Phi_1$ and $\Phi_2$ are equivalent, denoted by $\Phi_1 \cong \Phi_2$  if there is some $g \in \Gal(E^c/\Q)$ such that $g \Phi_1 =\Phi_2$. It is well-known that the Colmez conjecture depends only on the equivalence class of  a CM type.

\begin{lemma} \label{lem3.2}  The following are true.
\begin{enumerate}

\item One has $\Phi_S \cong \Phi_{\bar S}$, where $\bar S= \Z/n -S$.

\item  There is only one CM type of signature $(n-1, 1)$ up to equivalence.

\item Every CM type  of signature $(n-2, 2)$ is equivalent to some $\Phi_{\{0, i\}},  i\in \Z/n-\{0\}$.  Moreover, $\Phi_{\{0, i\}} \cong \Phi_{\{0, j\}}$ if and only if $i =\pm j$.  In particular, there are $[\frac{n}2]$ (the integral part of $\frac{n}2$) equivalence classes of CM types of signature $(n-2, 2)$.

\item  For $q \ge 3$,
every  CM type of signature $(p, q)$ is equivalent to  some  $\Phi_{\{0, S\}}$ where $S$ is a subset of $\Z/n -\{0\}$ of order $q-1$. Moreover, $\Phi_{\{0, S_1\}} \cong \Phi_{\{0, S_2\}}$ if and only if at least one of the following conditions hold:

(a) \quad There is some $i \in S_1$ such that $\{ -i, S_1-i\} =\{ 0, S_2\}$. Here $S-i =\{ j-i: \, j \in S\}$, and $\{ 0, S\} =\{0\} \cup S$.

(b)  \quad $S_1 =-S_2$.

\end{enumerate}

\end{lemma}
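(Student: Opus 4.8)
The plan is to reduce everything to the action of $\Gal(E^c/\Q) = G^c = \langle \sigma, \tau, \rho\rangle$ on the set of CM types $\{\Phi_S : S \subseteq \Z/n\}$, translating the group action into a combinatorial action on subsets $S$ of $\Z/n$. First I would compute how the generators act on a $\Phi_S$. Since $\rho$ is central and $\rho^2 = 1$, we have $\rho\cdot\Phi_S = \Phi_{\bar S}$ directly from the definition (each $\sigma^i$ gets swapped with $\sigma^i\rho$), which gives part (1). For $\sigma^a$ acting by left translation: $\sigma^a\cdot\Phi_S = \{\sigma^{a+i}\rho : i \in S\}\cup\{\sigma^{a+i} : i\notin S\} = \Phi_{S+a}$, so $\sigma$ acts by translation $S\mapsto S+a$. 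For $\tau$: using $\tau\sigma = \sigma^{-1}\tau$ and the fact that $\tau$ fixes $\kay$ (so $\tau$ and $\rho$ commute and $\tau$ does not interchange the two embeddings of $\kay$), one gets $\tau\cdot\{\sigma^i\rho^{\epsilon_i}\} = \{\sigma^{-i}\rho^{\epsilon_i}\tau\text{-coset}\}$; after sorting out the coset representatives this should come out to $\tau\cdot\Phi_S = \Phi_{-S}$ (possibly composed with a translation, which is harmless since we also have $\sigma$). So the group acting on subsets is generated by translations $S\mapsto S+a$, the negation $S\mapsto -S$, and complementation $S\mapsto \bar S$; up to complementation this is the affine dihedral group $\Z/n \rtimes \{\pm 1\}$ acting on $\Z/n$.

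With this dictionary in hand, parts (2)--(4) become elementary orbit counting. For (2): any singleton $S=\{i\}$ is carried to $\{0\}$ by the translation $S\mapsto S-i$, so there is one equivalence class. For (3): a two-element set $\{i,j\}$ is translated to $\{0, j-i\}$, and then $\Phi_{\{0,k\}} \cong \Phi_{\{0,k'\}}$ iff some element of the affine dihedral group (or its composition with complementation) maps $\{0,k\}$ to $\{0,k'\}$; the subgroup fixing the pair structure forces $k' = \pm k$, and one must check complementation introduces nothing new for $|S|=2$ unless $n$ is small — this yields the $[n/2]$ classes. For (4), $q\ge 3$: first translate so that $0\in S$; then two such sets $\{0\}\cup S_1$ and $\{0\}\cup S_2$ are equivalent iff one is obtained from the other by an element of $\Z/n\rtimes\{\pm1\}$ or by complementation. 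Condition (a) captures "translate by $-i$ for some $i\in S_1$ so that $0$ again lies in the set" (the translations that preserve the normalization $0\in S$), and condition (b) captures negation $S_1 = -S_2$; one then has to verify that complementation does not produce genuinely new equivalences in the range $3\le q\le n-2$ — essentially because complementing $\{0\}\cup S$ gives a set not containing $0$ of a different cardinality, and after re-normalizing one falls back into case (a) or (b). That last verification, and the care needed with the translation-versus-complementation interplay, is where I would need to be most careful.

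The main obstacle will be bookkeeping the $\tau$-action precisely — in particular pinning down whether $\tau\cdot\Phi_S$ equals $\Phi_{-S}$ on the nose or only up to a translation, which hinges on a careful choice of coset representatives for $\langle\sigma\rangle$ in $G$ and on the commutation $\tau\sigma\tau = \sigma^{-1}$ together with $[\tau,\rho]=1$. Once that is settled, the enumeration is routine group-orbit counting, though one should double-check the small-$n$ boundary cases (where complementation can coincide with a reflection) so that the stated count $[n/2]$ in (3) and the equivalence criterion in (4) are exactly right. I expect no serious analytic or arithmetic input is needed here — it is purely a computation with the finite group $G^c$ acting on $2^{\Z/n}$.
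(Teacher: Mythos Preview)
Your approach is correct and is exactly the paper's approach: the paper's proof computes $\rho\Phi_S=\Phi_{\bar S}$ and $\sigma^i\Phi_{\{0\}}=\Phi_{\{i\}}$, then reduces (3) and (4) to orbit bookkeeping under $G^c=\langle\sigma,\tau,\rho\rangle$, leaving most details to the reader. Your main stated worry resolves cleanly: since embeddings of $E$ correspond to left cosets of $\langle\tau\rangle$ in $G^c$, one has $\tau\cdot(\sigma^i\rho^\epsilon\langle\tau\rangle)=\sigma^{-i}\rho^\epsilon\tau\langle\tau\rangle=\sigma^{-i}\rho^\epsilon\langle\tau\rangle$, so $\tau\cdot\Phi_S=\Phi_{-S}$ on the nose with no stray translation; and the paper handles the complementation issue you flag in (3) only by the parenthetical ``(assuming $n\ne 4$)'', so your extra care there is warranted rather than excessive.
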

\begin{proof} Claim (1) is clear as $\rho\Phi_S =\Phi_{\bar S}$. Claim (2) is also clear as $\sigma^i\Phi_{\{0\}} =\Phi_{\{i \}}$. For (3), the same argument shows that every $\Phi_S$ with $|S|=2$ is equivalent to some $\Phi_{\{0, i\}}$ via some $\sigma^j$. Since $G^c=\langle \sigma, \tau, \rho\rangle$, $\Phi_{\{0, i\}} \cong \Phi_{\{0, j\}}$ if and only if there is some $g=\sigma^k \tau^l$ (assuming $n\ne 4$) such that $\{ g,  g \sigma^i\} =\{ 0, \sigma^j\}$. This implies $i=\pm  j\pmod n$.  The same argument also gives (4) and we leave the details to the reader.

\end{proof}

Recall that for a CM type $\Phi$, the reflex field $E^\Phi$ of $E$ is  the subfield of $\C$ (also $E^c$) generated by
$$
\tr_\Phi(z) =\sum_{g \in \Phi} g(z), \quad z \in E.
$$
The following proposition should be of independent  interest although it will not be used in this paper.

\begin{proposition} Let $\Phi=\Phi_{\{0, i\}}$ be a CM type of $E$.

\begin{enumerate}

\item If $n >4 $ and $i \ne n/2$, $E^\Phi= (E^c)^{\sigma^i\tau}$, the fixed field of $E^c$ under $\sigma^i\tau$.

\item  If $n > 4$ is even and $i =n/2$,  then  $E^\Phi= (E^c)^{\langle \sigma^i, \tau \rangle}$.

\item For $n=4$, the reflex field of $\Phi_{\{0, 1\}}$ is $(E^c)^{\langle \sigma\tau, \sigma^2 \rho \rangle}$, and the reflex field of $\Phi_{\{0, 2\}}$ is
$(E^c)^{\langle \sigma^2, \tau \rangle }$.
\end{enumerate}
\end{proposition}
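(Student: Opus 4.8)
The plan is to turn the computation of $E^\Phi$ into a finite group‑theoretic problem inside $G^c=\Gal(E^c/\Q)=\langle\sigma,\tau,\rho\rangle$ and to settle it by a short case analysis. The key identification is the standard one: writing $\Phi^{*}=\{g\in G^c:\ g|_E\in\Phi\}$ for the preimage in $G^c$ of the CM type $\Phi$ (a union of $n$ left cosets of $\Gal(E^c/E)=\langle\tau\rangle$), one has, for $g\in G^c$, $g(\tr_\Phi(z))=\sum_{\phi\in\Phi}(g\circ\phi)(z)$ for all $z\in E$; since distinct embeddings of $E$ into $E^c$ are linearly independent, $g$ fixes $E^\Phi=\Q(\tr_\Phi(z):z\in E)$ pointwise iff $\{g\circ\phi:\phi\in\Phi\}=\Phi$ as a set of embeddings, i.e.\ iff $g\Phi^{*}=\Phi^{*}$. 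Thus $E^\Phi=(E^c)^{H^{\Phi}}$ with $H^{\Phi}=\{g\in G^c:\ g\Phi^{*}=\Phi^{*}\}$, and the Proposition reduces to identifying $H^{\Phi_{\{0,i\}}}$ in each listed case.

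To compute $H^{\Phi_{\{0,i\}}}$ I would coordinatize the $2n$ left cosets of $\langle\tau\rangle$ by pairs $(k,\eps)\in(\Z/n)\times\{0,1\}$, with $(k,\eps)$ the coset of $\sigma^{k}\rho^{\eps}$. Using that $\rho$ is central and $\tau\sigma\tau^{-1}=\sigma^{-1}$, one finds that $g=\sigma^{a}\tau^{b}\rho^{c}$ sends $(k,\eps)\mapsto(a+(-1)^{b}k,\ \eps+c \bmod 2)$, and in these coordinates $\Phi^{*}_{\{0,i\}}=\{(0,1),(i,1)\}\cup\{(k,0): k\neq 0,i\}$. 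Then split according to $c$. For $c=0$ the second coordinate is preserved, so $g\in H^{\Phi}$ precisely when the map $k\mapsto a+(-1)^{b}k$ fixes the two‑element set $\{0,i\}\subset\Z/n$ (the complement then maps to itself automatically, so the ``unprimed'' part of $\Phi^{*}_{\{0,i\}}$ is handled for free); solving $\{a,\,a+(-1)^{b}i\}=\{0,i\}$ gives $g\in\{1,\sigma^{i}\tau\}$ when $2i\neq0$ and $g\in\langle\sigma^{n/2},\tau\rangle$ when $2i=0$ (which forces $n$ even, $i=n/2$). For $c=1$, $g$ interchanges the two sheets, so it must carry the $(n-2)$‑element index set $\Z/n\setminus\{0,i\}$ injectively into the two‑element set $\{0,i\}$; this is possible only if $n\le4$.

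For $n>4$ the $c=1$ case is thus vacuous, giving $H^{\Phi_{\{0,i\}}}=\langle\sigma^{i}\tau\rangle$ when $i\neq n/2$ and $H^{\Phi_{\{0,i\}}}=\langle\sigma^{i},\tau\rangle$ when $i=n/2$; passing to fixed fields yields parts (1) and (2). For $n=4$ the inequality $n-2\le2$ is an equality, so the $c=1$ contribution can be nonzero and must be enumerated by hand. For $\Phi_{\{0,1\}}$ (where $2i\neq0$) the $c=1$ solutions of $\{a,a+(-1)^{b}\}=\{2,3\}$ are $\sigma^{2}\rho$ and $\sigma^{3}\tau\rho$; together with the $c=0$ solution $\sigma\tau$ these generate the order‑four group $\langle\sigma\tau,\sigma^{2}\rho\rangle$, which is part (3). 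For $\Phi_{\{0,2\}}$ (the case $i=n/2=2$) I would likewise list $H^{\Phi_{\{0,2\}}}$ from the coordinate description; here one must be attentive, because $\Phi_{\{0,2\}}$ is induced from the biquadratic CM subfield $(E^c)^{\langle\sigma^{2},\tau\rangle}$ of $E$, so the $\rho$‑twisted elements contribute and the resulting fixed field must be read off with care.

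The only genuinely delicate step is this passage from $n>4$ to $n=4$. For $n>4$ everything is rigidly forced by the strict inequality $n-2>2$, which rules out every $\rho$‑twisted symmetry of $\Phi^{*}_{\{0,i\}}$; at $n=4$ that inequality degenerates to an equality, the $\rho$‑twisted symmetries reappear, and the two cases $\Phi_{\{0,1\}}$ and $\Phi_{\{0,2\}}$ have to be computed directly rather than deduced from the general pattern.
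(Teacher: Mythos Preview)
Your approach is correct and is a cleaner, purely group-theoretic variant of the paper's argument. Both identify $\Gal(E^c/E^\Phi)$ with the left stabilizer of $\Phi$ in $G^c$; the paper reaches this through the explicit trace formula $\tr_{\Phi^c}(z)=\tr_{E^c/\kay}(z)+(1+\sigma^i)(1+\tau)(\rho-1)(z)$ and then checks which $g$ fix it (first reducing to $g\in G$ via $\kay\subset E^\Phi$ when $n\neq4$), whereas you phrase the same thing directly as $g\Phi^*=\Phi^*$ on the coset space. Your coordinate computation for $c=0$ and the cardinality argument ruling out $c=1$ when $n>4$ recover parts (1), (2), and the $\Phi_{\{0,1\}}$ half of (3) just as the paper does.

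The one place you stop short is $\Phi_{\{0,2\}}$ at $n=4$, and your caution there is well placed: carrying out your own recipe yields $c=1$ solutions $\sigma\rho,\sigma^3\rho,\sigma\tau\rho,\sigma^3\tau\rho$, so $H^{\Phi_{\{0,2\}}}=\langle\sigma^2,\tau,\sigma\rho\rangle$ has order $8$, not $4$. Equivalently, as you note, $\Phi_{\{0,2\}}$ is induced from the CM type $\{\rho,\sigma\}$ of the biquadratic field $E_0=(E^c)^{\langle\sigma^2,\tau\rangle}$, whose reflex is the imaginary quadratic $(E_0)^{\sigma\rho}=(E^c)^{\langle\sigma^2,\tau,\sigma\rho\rangle}$. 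This disagrees with the paper's stated answer $(E^c)^{\langle\sigma^2,\tau\rangle}$. The paper's proof tests $\rho,\sigma,\tau\rho$, but $\tau\rho\equiv\rho\pmod{\langle\sigma^2,\tau\rangle}$, so only two of the three nontrivial cosets are being checked and $\sigma\rho$ is missed; one sees directly from the paper's own formula $\alpha(y)=(\sigma+\sigma^3-1-\sigma^2)(y)\cdot\delta$ that $\sigma\rho(\alpha(y))=\alpha(y)$. Your method is therefore sound, and completing it corrects the paper's claim in this last case.
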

\begin{proof} Let $\Phi^c$ be the extension of $\Phi_{0, i}$ to $E^c$. Then the reflex field of $\Phi$ is the same as that of $\Phi^c$. It is easy to check
\begin{align*}
\tr_{\Phi^c}(z)&=\tr_{E^c/\kay}(z) + (1+\sigma^i)(1+\tau) (\rho-1)(z).
\end{align*}
In particular, for  $z \in \kay,  \tr_{\Phi^c}(z) =2n z + 4(\bar z -z)$.  This implies that $\kay \subset E^\Phi$ when $n\ne 4$.
Assuming $\kay \subset E^\Phi$. Then it is clear from the above identity  that
an element $g \in \Gal(E^c/\kay )=\Gal(F^c/\Q)$ fixes $E^\Phi=(E^c)^{\Phi^c}$ if and only if
$$
g(1+\sigma^i)(1+\tau) (\rho-1)  = (1+\sigma^i) (1+\tau)(\rho-1).
$$
Now we check it case by case. Recall that $\Gal(F^c/\Q)=\{ \sigma^j, \sigma^j \tau:\,   0 \le j \le n-1\}$. Taking $z=x \delta$ with $x \in F^c$ and $\delta \in \kay$ with $\rho(\delta)=-\delta$,  the above identity becomes over $F^c$
\begin{equation} \label{eq3.1}
g(1+\sigma^i)(1+\tau)= (1+\sigma^i)(1+\tau).
\end{equation}
When $g=\sigma^j$, (\ref{eq3.1}) becomes (restricting on $F$ is enough)
$$
\sigma^j + \sigma^{i+j} = 1 +\sigma^i,
$$
which is possible only when $n$ is even and $i=j=\frac{n}2 \pmod n$.
When $g=\sigma^j\tau$, (\ref{eq3.1})  becomes
$$
\sigma^j + \sigma^{j-i} = 1 +\sigma^i,
$$
which is the same as $j=i$ (any case) or $i=\frac{n}2$, $j=0$ and $n$ is even. This verifies the proposition except the case $n=4$.

 Now assume that $n=4$. Write $z=x+y \delta \in E$ with  $x , y \in F$, one sees that
 $$
 \tr_{\Phi}(z) = \tr_{F/\Q}(x) + \tr_{F/\Q}(y) \delta -2 (y+\sigma^i(y)) \delta.
 $$
So the reflex field $E^\Phi$ is generated by
$$
\alpha(y) = \tr_{F/\Q}(y) \delta -2 (y+\sigma^i(y)) \delta,  \quad y \in F.
$$
When $i =2$,  it is easy to see $E^\Phi \subset (E^c)^{\langle \sigma^2, \tau \rangle }$.  If the inclusion is proper, $E^\Phi$ has to be fixed by at least one  of $\rho$, $\sigma$ and $\tau\rho$. One verifies that none of these elements fix $E^\Phi$.

When $i=1$, one checks that $E^\Phi \subset (E^c)^{\langle \sigma^2\rho , \sigma\tau \rangle }$. If the inclusion is proper, $E^\Phi$ has to be fixed by at least one  of $\rho$, $\sigma^2$ and $\tau\rho$. One verifies that none of these elements fix $E^\Phi$.

\end{proof}

\subsection{Artin representations and class functions}

Let $\zeta_n=e(1/n)=e^{\frac{2 \pi i}n}$. For  $j \in \Z/n$, we define
\begin{align*}
\mu_j: &H \rightarrow \C^\times, \quad \mu_j(\sigma) = \zeta_n^j, \quad \pi_j =\Ind_{H}^G \mu_j.
\end{align*}
When $j=0$,  $\pi_0 \cong \chi_0 \oplus \chi_s$ decomposes into the direct sum of two characters of $G$, factoring through $\langle \tau \rangle$. When $0 < j < n/2$ , $\pi_j$ is  an irreducible $2$-dimensional representation of $G$ with character $\chi_j =\chi_{\pi_j}$. We extend $\mu_j$ trivially to $H^c=H\times \langle \rho \rangle$,  $\pi_j^c=\Ind_{H^c}^{G^c} \mu_j^c$, and $\chi_j^c = \chi_{\pi_j^c}$.
When $n=2m$ is even, $\pi_m$ decomposes into one dimensional representations (characters): $\pi \cong \chi_{m, 0} \oplus \chi_{m, 1}$. Here ($i=0, 1$)
$$
\chi_{m, i} (\sigma) =-1, \quad \chi_{m, i} (\tau) =(-1)^i.
$$

  Let $\psi_1$ and  $\psi_{\rho}$ be the trivial and non-trivial characters of  $\Gal(\kay/\Q)$, which can also be viewed as characters of $G^c$ and even $G^{\CM}$. Given a function $f_1$ on $G$ and a  function $f_2$ on $\Gal(\kay/\Q)$,  we denote $f_1 f_2$ for the class function on $G^c =G \times \langle \rho \rangle$ in the obvious way:
$$
(f_1f_2)(g_1g_2)= f_1(g_1) f_2(g_2),  \quad g_1 \in G, g_2 \in \langle \rho \rangle.
$$
We  shorten $f_1 = f_1 \mathds{1}_{1}$, where $\mathds{1}_1(g_2) =1$ or $0$ depending on whether $g_2 =1$ or $\rho$. We also  use similar convention for a function $f_2$ on $\langle \rho \rangle$.

The following lemma is well-known and can be checked easily.

\begin{lemma} Let $m=[\frac{n}2]$ be the integer part of $\frac{n}2$.  Then
\begin{enumerate}
\item  When $n$ is odd, $\{ \chi_0, \chi_s,  \chi_j,  0< j \le m\}$ gives the  complete set of irreducible characters of $G=D_{2n}$.

\item When $n$  is even, $ \{ \chi_0, \chi_s,  \chi_j,  0< j \le m-1, \chi_{m, 0}, \chi_{m, 1} \}$ gives the  complete set of irreducible characters of $G=D_{2n}$.

\item The complete set of irreducible characters of  $G^c=G \times \langle \rho \rangle$  is the union of $\mathcal C \psi_1$ and $\mathcal C \psi_\rho$, where $\mathcal C$ is the complete set of irreducible characters of $G=D_{2n}$.

\end{enumerate}

\end{lemma}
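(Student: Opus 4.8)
The plan is to treat (1) and (2) by the standard character theory of the dihedral group $G=D_{2n}=\langle\sigma,\tau\rangle$, and (3) by the elementary product formula for the irreducible characters of a direct product. First I would produce the one-dimensional characters. Since $[\sigma,\tau]=\sigma^2$, the commutator subgroup is $[G,G]=\langle\sigma^2\rangle$, so the abelianization $G^{\mathrm{ab}}$ is $\Z/2$ when $n$ is odd (then $\langle\sigma^2\rangle=\langle\sigma\rangle$) and $(\Z/2)^2$ when $n$ is even. Pulling back the characters of $G^{\mathrm{ab}}$ recovers exactly $\chi_0,\chi_s$ in the odd case and $\chi_0,\chi_s,\chi_{m,0},\chi_{m,1}$ in the even case, in agreement with the formulas $\chi_{m,i}(\sigma)=-1$, $\chi_{m,i}(\tau)=(-1)^i$; and indeed $\pi_0=\Ind_H^G\mu_0=\chi_0\oplus\chi_s$, while for $n=2m$ the character $\mu_m$ is $\tau$-invariant (as $-m\equiv m\bmod n$) and $\pi_m=\chi_{m,0}\oplus\chi_{m,1}$, as already recorded in the text preceding the lemma.

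Next I would handle the two-dimensional pieces $\pi_j=\Ind_H^G\mu_j$ for $0<j<n/2$. As $H=\langle\sigma\rangle$ is normal of index $2$ with coset representative $\tau$, and $\tau$ acts on the characters of $H$ by $\mu_j\mapsto\mu_j^\tau=\mu_{-j}$ (because $\tau\sigma\tau^{-1}=\sigma^{-1}$), Mackey's irreducibility criterion gives that $\pi_j$ is irreducible precisely when $\mu_j\ne\mu_{-j}$, i.e.\ when $2j\not\equiv 0\bmod n$, and that $\pi_j\cong\pi_{j'}$ iff $\{j,-j\}=\{j',-j'\}$ in $\Z/n$. Hence the distinct irreducible $2$-dimensional characters are exactly the $\chi_j$ with $0<j\le m$ and $j\ne n/2$; that is, $0<j\le m$ for $n$ odd and $0<j\le m-1$ for $n$ even. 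A short computation gives $\chi_j(1)=2$, $\chi_j(\sigma^\ell)=\zeta_n^{j\ell}+\zeta_n^{-j\ell}$, $\chi_j(\sigma^\ell\tau)=0$, which shows these are pairwise distinct and distinct from the one-dimensional characters. Finally I would close the list by the dimension count $\sum_\chi\chi(1)^2=|G|=2n$: in the odd case $2\cdot 1+\frac{n-1}{2}\cdot 4=2n$, and in the even case $4\cdot 1+(\frac{n}{2}-1)\cdot 4=2n$; since the characters exhibited are distinct and their squared dimensions already exhaust $|G|$, the list is complete.

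For (3), $G^c=G\times\langle\rho\rangle$ is a direct product and $\langle\rho\rangle\cong\Z/2$ has exactly the two irreducible characters $\psi_1$ and $\psi_\rho$. The standard fact that the irreducible characters of a direct product are precisely the external tensor products of irreducible characters of the two factors then gives that the irreducible characters of $G^c$ are $\{\chi\psi_1:\chi\in\mathcal C\}\cup\{\chi\psi_\rho:\chi\in\mathcal C\}$, where $\mathcal C$ is the set produced in (1)--(2); this is the claimed union (and is consistent with $\chi_j^c=\chi_j\psi_1$ in the notation of the preceding paragraph).

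As for difficulty, there is no real obstacle here: the content is entirely standard representation theory of $D_{2n}$ and of direct products $D_{2n}\times\Z/2$. The only thing requiring care is bookkeeping --- matching the explicitly named characters $\chi_0,\chi_s,\chi_{m,0},\chi_{m,1}$ and the induced-representation notation $\pi_j,\chi_j,\pi_j^c,\chi_j^c$ fixed earlier in the section with the abstract classification, and keeping straight which values of $j$ yield genuinely new and genuinely irreducible representations. Once that correspondence is pinned down, Mackey's criterion together with the orthogonality relations (or the dimension count) settles the odd and even cases at once.
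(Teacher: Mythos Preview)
Your proposal is correct and gives the standard argument. The paper itself offers no proof at all beyond the sentence ``The following lemma is well-known and can be checked easily,'' so there is nothing to compare against; what you have written is exactly the routine verification (commutator subgroup for the linear characters, Mackey's criterion for the induced $\pi_j$, dimension count, and the direct-product formula) that the authors are implicitly leaving to the reader.
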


\begin{lemma} \label{lem:character1} As class functions of $G^c$ (also of $G^{\CM}$),  one has
\begin{enumerate}
\item
\begin{align*}
2\chi_0&=2\tr_{E^c/\kay} =\chi_{\Ind_{G}^{G^c}\chi_0},
\\
2\chi_s &=\chi_{\Ind_{G}^{G^c}\chi_s},
\\
2 \chi_j &= \chi_{\Ind_H^{G^c} \mu_j},  \quad 1 \le j \le m,
\\
2 \chi_{m, i}&=\chi_{\Ind_{G}^{G^c}\chi_{m, i}}, \quad \hbox{when $n$ is even}.
\end{align*}

\item  One has
\begin{align*}
(1-\rho) \chi_0&=(1-\rho)\tr_{E^c/\kay} =\chi_{\kay/\Q},
\\
(1-\rho) \chi_s &=\chi_{\kay_{\rho\tau}/\Q},
\\
(1-\rho)  \chi_j &= \chi_{\Ind_{H^c}^{G^c} \mu_j^c \psi_\rho},  \quad 1 \le j \le m,
\\
(1-\rho)\chi_{m, i}&=\chi_{m, i } \psi_\rho, \quad \hbox{when $n$ is even}.
\end{align*}
Here $\chi_{m, i} \psi_\rho$ is viewed as a character of $G^c$ in the obvious way.
\end{enumerate}
\end{lemma}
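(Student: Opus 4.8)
The plan is to verify each of the eight identities directly by evaluating both sides as class functions on the group $G^c = G \times \langle \rho \rangle = D_{2n} \times \Z/2$, using the standard formula for the character of an induced representation. Recall that for a subgroup $K \le G^c$ and a character $\mu$ of $K$, one has $\chi_{\Ind_K^{G^c}\mu}(g) = \frac{1}{|K|}\sum_{x \in G^c,\ x^{-1}gx \in K} \mu(x^{-1}gx)$; since everything in sight is a class function it suffices to check agreement on a set of conjugacy class representatives of $G^c$, namely the elements $\sigma^k$, $\sigma^k\tau$, $\sigma^k\rho$, $\sigma^k\tau\rho$ for appropriate ranges of $k$. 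The conjugacy classes of $D_{2n}$ are well known (the rotations $\{\sigma^k,\sigma^{-k}\}$ and, depending on the parity of $n$, one or two classes of reflections), and these lift in the obvious way to $G^c$.

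First I would record the character tables we need: the value of $\chi_j$ on $\sigma^k$ is $\zeta_n^{jk}+\zeta_n^{-jk} = 2\cos(2\pi jk/n)$ and on any reflection it is $0$ (for $0 < j < n/2$); $\chi_0(\sigma^k)=\chi_s(\sigma^k)=1$ with $\chi_0(\tau)=1$, $\chi_s(\tau)=-1$; and when $n=2m$, $\chi_{m,i}(\sigma^k)=(-1)^k$, $\chi_{m,i}(\sigma^k\tau)=(-1)^{k+i}$ (one must be slightly careful about which $\tau$-class carries which sign, but the two choices differ only by relabeling $i=0,1$). Then part (1) is essentially the statement that inducing from an index-$2$ subgroup doubles the character on that subgroup and kills it off the subgroup: for the one-dimensional $\chi_0,\chi_s,\chi_{m,i}$ of $G$, $\Ind_G^{G^c}$ simply spreads the value over the $\rho$-coset with value $0$ there (since $G$ is the kernel of the projection $G^c\to\langle\rho\rangle$), giving $2\chi$ on $G$ and $0$ on $G\rho$, which is exactly ``$2\chi$ viewed on $G^c$''. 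For $\chi_j$ with $1\le j\le m$, one checks $\Ind_H^{G^c}\mu_j$ by transitivity: $\Ind_H^{G^c}\mu_j = \Ind_G^{G^c}\Ind_H^G\mu_j = \Ind_G^{G^c}\pi_j$, whose character is $\chi_{\pi_j}$ on $G$ (namely $\chi_j$ when $j<n/2$, or $\chi_{m,0}+\chi_{m,1}$ when $j=m$) and $0$ on $G\rho$, matching $2\chi_j$ on $G^c$. (When $j=m$ and $n$ even, $\chi_m$ should be read as $\chi_{m,0}+\chi_{m,1}$, consistently with the decomposition $\pi_m\cong\chi_{m,0}\oplus\chi_{m,1}$ noted in the text.)

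For part (2), the operator $1-\rho$ sends a class function $f$ on $G^c$ to the class function $g\mapsto f(g)-f(\rho g)$. Applied to $\chi_0=\tr_{E^c/\kay}$, which is inflated from $G$ and hence $\rho$-invariant, one gets $\chi_0(g) - \chi_0(\rho g)$; evaluating on $\sigma^k$ and $\sigma^k\rho$ and comparing with the quadratic character $\chi_{\kay/\Q}$ of $\Gal(\kay/\Q)$ inflated to $G^c$ (which is $+1$ on $G$, $-1$ on $G\rho$) gives the first identity after noting $\chi_0\equiv 1$ on $G$. The same computation with $\chi_s$ (which is $+1$ on $\langle\sigma\rangle$ and $-1$ on the $\tau$-reflections) produces the character of $\Gal(\kay_{\rho\tau}/\Q)$: one should identify $\kay_{\rho\tau}$ as the fixed field of $\langle\sigma,\rho\tau\rangle$, so its nontrivial character is the one that is $+1$ exactly on $\langle\sigma,\rho\tau\rangle$, and check this matches $(1-\rho)\chi_s$ on class representatives. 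For the two-dimensional case, I would again use induction in stages: $\Ind_{H^c}^{G^c}\mu_j^c\psi_\rho = \Ind_{G^c\text{ via }\langle\sigma,\rho\rangle}(\cdots)$, and compute its character to be $\chi_j$ on $G$ and $-\chi_j$ on $G\rho$, i.e. exactly $(1-\rho)\chi_j$; concretely on $\sigma^k\rho^\varepsilon$ the induced character is $(-1)^\varepsilon(\zeta_n^{jk}+\zeta_n^{-jk})$ and vanishes on all reflections. The last identity, $(1-\rho)\chi_{m,i}=\chi_{m,i}\psi_\rho$, is immediate from the definition of the product convention $f_1 f_2$ since $\chi_{m,i}$ is inflated from $G$ and $\psi_\rho(\rho)=-1$.

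The only real subtlety — and the place to be careful rather than the place of genuine difficulty — is bookkeeping: matching the abstract characters $\chi_{\kay_g/\Q}$, $\chi_{m,i}\psi_\rho$, etc., with the correct subfields of $E^c$ and the correct signs, which requires keeping straight the identifications $\kay_\rho=\kay$, $\kay_\tau=(F^c)^\sigma$, $\kay_{\rho\tau}=(E^c)^{\langle\sigma,\rho\tau\rangle}$ from the diagram, and being consistent about the labeling of the two reflection classes when $n$ is even. Since the lemma is asserted to be ``well-known and easily checked,'' I would present the proof as: fix conjugacy-class representatives of $G^c$; tabulate both sides on them using the induced-character formula together with induction in stages; observe equality column by column; and remark that the subfield identifications are those of the diagram preceding the lemma. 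No step is an obstacle in the sense of requiring a new idea; the work is purely a finite verification.
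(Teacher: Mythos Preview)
Your proposal is correct and is precisely the kind of direct verification the paper has in mind: the paper's own proof reads in its entirety ``All follow from a standard calculation. We leave it to the reader.'' Your use of transitivity of induction together with the fact that $\rho$ is central (so $\Ind_G^{G^c}\chi$ is $2\chi$ on $G$ and $0$ on $G\rho$), plus the pointwise check of the quadratic characters against the subfield diagram, is exactly the intended computation.
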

\begin{proof} All follow from  a standard calculation. We leave it to the reader.

\end{proof}

On the other hand, when $n=2m+1$ is odd, $G$ has $m+2$ conjugacy classes
\begin{equation} \label{eq:OddOrbit}
\OO_0=\{1\},  \quad \OO_j=\{\sigma^j,\sigma^{-j}:  1\le j \le m \} \quad \hbox{ and } \OO_{m+1}=\{ \sigma^j \tau:   j \in  \Z/n\}
\end{equation}
which we also write as formal sums sometimes. We denote $\mathds{1}_{\OO_j}$ for their characteristic functions.  When  $n=2m$ is even, $G$ has $m+3$ conjugacy classes $\OO_0$, $\OO_j,  1 \le j \le m-1$ as above, $\OO_{m} =\{ \sigma^m\}$, and
$$
\OO_{m+1, i} = \{ \sigma^{i+2j}\tau:  0\le j \le m-1 \}, \quad i =0, 1.
$$
In this case, we denote $\mathds{1}_{m+1} = \mathds{1}_{\OO_{m+1, 0}}+ \mathds{1}_{\OO_{m+1, 1}}$.

\begin{lemma} \label{lem:character2}  One has
\begin{align*}
2n(1-\rho)\mathds{1}_{\OO_0} &= \chi_{\Ind_{\Gal(E^c/F^c)}^{G^c} \chi_{E^c/F^c}},
\\
f_{E/F}&=\begin{cases}
 (1-\rho)(n\mathds{1}_{\OO_0} +\mathds{1}_{\OO_{m+1}}) &\ff n \hbox{ is  odd},
 \\
   (1-\rho)(n\mathds{1}_{\OO_0} +2\mathds{1}_{\OO_{m+1, 0}}) &\ff n \hbox{ is  even}
   \end{cases}
\end{align*}
where $f_{E/F}$ is the  character of $\Ind_{\Gal(E^c/F)}^{G^c} \chi_{E/F}$  defined in Proposition \ref{prop:AverageColmez}.
\end{lemma}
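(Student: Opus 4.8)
The plan is to deduce both identities from the decomposition $G^c=G\times\langle\rho\rangle$ together with the behaviour of induction under direct products. First I would identify the two subgroups in question and the characters being induced. Since $F^c$ is totally real, $\rho$ fixes $F^c$, so $\Gal(E^c/F^c)=\langle\rho\rangle=\{1\}\times\langle\rho\rangle$ and $\chi_{E^c/F^c}$ is its sign character. Since $\tau$ fixes $F$ and, by our extension convention, acts trivially on $\kay$, it fixes $E=F\kay$; as $[E^c:E]=2$ this forces $\Gal(E^c/E)=\langle\tau\rangle$, hence $\Gal(E^c/F)=\langle\tau,\rho\rangle=\langle\tau\rangle\times\langle\rho\rangle$, and $\chi_{E/F}$, inflated from $\Gal(E/F)=\langle\tau,\rho\rangle/\langle\tau\rangle$, is trivial on $\langle\tau\rangle$ and equals the sign character on $\langle\rho\rangle$. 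By the usual compatibility of induction with inflation along $G^{\CM}\twoheadrightarrow G^c$ (the subgroups $G_{F^c}$ and $G_F$ being the full preimages of $\langle\rho\rangle$ and $\langle\tau,\rho\rangle$), the class function $f_{E/F}$ of Proposition~\ref{prop:AverageColmez}, viewed on $G^c$, equals $\chi_{\Ind_{\langle\tau,\rho\rangle}^{G^c}\chi_{E/F}}$, so it suffices to compute inside the finite group $G^c$.

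Next I would run the two induced-character computations using $\Ind_{A\times\langle\rho\rangle}^{G\times\langle\rho\rangle}(\alpha\boxtimes\beta)\cong(\Ind_A^G\alpha)\boxtimes\beta$. For the first identity, with $A=\{1\}$, this gives $\Ind_{\langle\rho\rangle}^{G^c}\chi_{E^c/F^c}\cong\mathrm{reg}_G\boxtimes\mathrm{sgn}$, whose character takes the value $2n$ at $1$, $-2n$ at $\rho$, and $0$ elsewhere; under the convention $f_1=f_1\mathds{1}_1$ the operation $f_1\mapsto f_1\boxtimes\mathrm{sgn}$ is precisely multiplication by $1-\rho$, so this character is $2n(1-\rho)\mathds{1}_{\OO_0}$. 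For the second identity, with $A=\langle\tau\rangle\le G$, we get $\Ind_{\langle\tau,\rho\rangle}^{G^c}\chi_{E/F}\cong P\boxtimes\mathrm{sgn}$, where $P$ is the permutation representation of $D_{2n}$ on $G/\langle\tau\rangle$, i.e.\ the standard action on the $n$ vertices of a regular $n$-gon, realized on $\Z/n$ by $\sigma(v)=v+1$ and $\tau(v)=-v$ with $\langle\tau\rangle$ the stabilizer of $0$. The character of $P$ at $g$ is the number of vertices fixed by $g$: $n$ at the identity, $0$ at each nontrivial rotation $\sigma^j$, and at a reflection $\sigma^j\tau$ it equals the number of $v\in\Z/n$ with $2v\equiv j\pmod n$, which is $1$ for every $j$ when $n$ is odd (so the value on the single class $\OO_{m+1}$ of~(\ref{eq:OddOrbit}) is $1$), is $2$ when $n$ is even and $j$ is even (the class $\OO_{m+1,0}$), and is $0$ when $n$ is even and $j$ is odd (the class $\OO_{m+1,1}$). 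Hence the character of $P$ is $n\mathds{1}_{\OO_0}+\mathds{1}_{\OO_{m+1}}$ for $n$ odd and $n\mathds{1}_{\OO_0}+2\mathds{1}_{\OO_{m+1,0}}$ for $n$ even, and twisting by $\mathrm{sgn}$ (i.e.\ multiplying by $1-\rho$) yields the two asserted formulas for $f_{E/F}$.

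The one genuinely delicate step, and the main obstacle, is the conjugacy-class bookkeeping for even $n$: one must check that $\tau$ itself lies in $\OO_{m+1,0}$, equivalently that the reflections $D_{2n}$-conjugate to $\tau$ are exactly the $\sigma^{2j}\tau$, so that the character of $P$ is $2$ on $\OO_{m+1,0}$ and $0$ on $\OO_{m+1,1}$ and not the other way round; this is precisely what attaches the coefficient $2$ to $\OO_{m+1,0}$. A secondary, purely formal point is to phrase the induction-inflation compatibility carefully so that no stray scalar enters when moving between $G^{\CM}$, $G^c$ and their subquotients. As a cross-check I would also recompute both characters directly from the Frobenius formula $\chi_{\Ind_K^{G^c}\psi}(g)=|C_{G^c}(g)|\sum_k\psi(k)/|C_K(k)|$, the sum running over the $K$-conjugacy classes fusing into $[g]$; only $\OO_0$, $\rho\OO_0$ and the reflection class(es) meeting $\langle\tau,\rho\rangle$ contribute, using $|C_{G^c}(\tau)|=4$ for $n$ odd and $8$ for $n$ even.
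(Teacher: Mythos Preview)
Your argument is correct. The paper's proof is a direct matrix computation: it chooses the standard basis $\{f_j\}_{j\in\Z/n}$ of $\Ind_{\Gal(E^c/F)}^{G^c}\chi_{E/F}$, writes down the action of $\rho^a\sigma^b$ and $\rho^a\sigma^b\tau$ on this basis, and reads off the trace as $(-1)^a n\,\delta_{0,b}$ and $(-1)^a\,|\{j\in\Z/n:\,2j\equiv -b\}|$ respectively; the first identity is left to the reader. You instead exploit the product structure $G^c=G\times\langle\rho\rangle$ to factor the induction as $(\Ind_{\langle\tau\rangle}^G\mathbf{1})\boxtimes\mathrm{sgn}$ and then recognise $\Ind_{\langle\tau\rangle}^G\mathbf{1}$ as the permutation representation of $D_{2n}$ on the $n$ vertices $G/\langle\tau\rangle$, so that the character at a reflection $\sigma^j\tau$ is the number of solutions of $2v\equiv j\pmod n$. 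The two computations meet at exactly the same count; your packaging separates the $\rho$-twist from the dihedral part and gives the first identity for free (regular representation $\boxtimes$ sign), while the paper's version is a single pass that keeps the $\rho$-factor inside the basis calculation. Either way the even-$n$ check that $\tau\in\OO_{m+1,0}$, which you flag, is the only place one can go wrong.
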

\begin{proof} We verify the second  identity and leave the first one to the reader.  Let $f$ be the function defined in the right hand side.  Notice that $\pi=\Ind_{\Gal(E^c/F)}^{G^c}\chi_{E/F}$ has a standard basis $f_{j}$ where  $f_{j}(\sigma^i)=\delta_{i,j}$.  One checks  for $a=0, 1$ and $b\in \Z/n$
$$
\pi(\rho^a \sigma^b ) f_j(\sigma^i) =f_j(\rho^a \sigma^{i+b})  =(-1)^a \delta_{j, i+b}=(-1)^a f_{j-b}(\sigma^i),
$$
and so $\chi_\pi(\rho^a \sigma^b) = (-1)^a n \delta_{0, b}=f(\rho^a \sigma^b)$.  Next,
$$
\pi(\rho^a \sigma^b \tau) f_j(\sigma^i) = f_j(\rho^a \sigma^{i+b} \tau)=(-1)^a f_j(\tau \sigma^{-i -b}) =(-1)^a  \delta_{j, -i-b}.
$$
So
$$
\pi(\rho^a \sigma^b \tau) f_j= (-1)^a f_{-j-b},
$$
and
$$
\chi_{\pi} (\rho^a \sigma^b \tau) = (-1)^a |\{ j \in \Z/n\} :\,  2j =-b\}| =f(\rho^a \sigma^b \tau).
$$
\end{proof}

\begin{corollary} When $n$ is odd, one has
$$
L(s, \chi_{E/F})^2 L(s, \chi_{\kay_{\rho\tau}/\Q}) =L(s, E^c/F^c) L(s, \chi_{\kay/\Q}).
$$
\end{corollary}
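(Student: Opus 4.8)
The plan is to rewrite every $L$-function appearing on the two sides as an Artin $L$-function of a (virtual) representation of $G^c=\Gal(E^c/\Q)$, using inductivity and additivity of Artin $L$-functions, and then reduce the claimed identity to an identity of class functions on $G^c$, which in the end amounts to a one-line computation on the conjugacy classes of $G=D_{2n}$.

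Concretely, first I would record the four translations. By inductivity, $L(s,\chi_{E/F})=L(s,\Ind_{\Gal(E^c/F)}^{G^c}\chi_{E/F})$, and by Proposition~\ref{prop:AverageColmez} together with Lemma~\ref{lem:character2} this induced representation has character $f_{E/F}=(1-\rho)(n\mathds{1}_{\OO_0}+\mathds{1}_{\OO_{m+1}})$ since $n$ is odd. Likewise $L(s,E^c/F^c)=L(s,\Ind_{\Gal(E^c/F^c)}^{G^c}\chi_{E^c/F^c})$, whose character is $2n(1-\rho)\mathds{1}_{\OO_0}$ by the first identity of Lemma~\ref{lem:character2}; here one uses that $\rho\in\Gal(E^c/F^c)$ because $F^c$ is totally real, so the induced character is supported on $\{1,\rho\}$. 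Finally Lemma~\ref{lem:character1}(2) gives $L(s,\chi_{\kay/\Q})=L(s,(1-\rho)\chi_0)$ and $L(s,\chi_{\kay_{\rho\tau}/\Q})=L(s,(1-\rho)\chi_s)$. Since $L(s,V\oplus W)=L(s,V)L(s,W)$, the asserted identity of $L$-functions follows once we prove the identity of class functions of $G^c$
\[
2 f_{E/F} + (1-\rho)\chi_s = 2n(1-\rho)\mathds{1}_{\OO_0} + (1-\rho)\chi_0 .
\]
Both sides have the form $(1-\rho)$ times a class function on $G$, hence are determined by their restriction to $G$; substituting the formula for $f_{E/F}$, the identity collapses to $2\mathds{1}_{\OO_{m+1}}+\chi_s=\chi_0$ on $G$. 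For $n$ odd the conjugacy classes of $D_{2n}$ are $\OO_0=\{1\}$, $\OO_j=\{\sigma^{\pm j}\}$ for $1\le j\le m$, and the single class $\OO_{m+1}$ of all reflections $\sigma^j\tau$; recalling that $\chi_0$ is trivial and $\chi_s(\sigma^j)=1$, $\chi_s(\sigma^j\tau)=-1$, evaluation gives $1=1$ on $\OO_0$ and the $\OO_j$ and $2+(-1)=1$ on $\OO_{m+1}$, which finishes the argument.

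There is no real obstacle here; the proof is essentially bookkeeping. The two points that require care are (i) identifying the correct representation of $G^c$ whose Artin $L$-function equals each displayed Hecke $L$-function — in particular the role of $F^c$ being totally real in pinning down the character of $\Ind_{\Gal(E^c/F^c)}^{G^c}\chi_{E^c/F^c}$ — and (ii) the fact, special to the case $n$ odd, that the reflections of $D_{2n}$ form a single conjugacy class, which is exactly what makes the shape of $f_{E/F}$ and the final evaluation work out so cleanly. (For $n$ even one would instead meet $\mathds{1}_{\OO_{m+1,0}}$ and the characters $\chi_{m,i}$, and the analogous relation is governed by the even-case part of Lemma~\ref{lem:character2}.)
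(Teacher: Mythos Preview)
Your proof is correct and follows essentially the same approach as the paper: both reduce the $L$-function identity to the class-function identity $2f_{E/F}=2n(1-\rho)\mathds{1}_{\OO_0}+(1-\rho)\chi_0-(1-\rho)\chi_s$ via Lemmas~\ref{lem:character1} and~\ref{lem:character2}, which in turn boils down to $\mathds{1}_{\OO_{m+1}}=\tfrac{1}{2}(\chi_0-\chi_s)$. The only cosmetic difference is that the paper cites this last identity from Proposition~\ref{prop:ClassFunctionRelation}, whereas you verify it directly by evaluating on the conjugacy classes of $D_{2n}$.
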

\begin{proof} One checks (see Proposition \ref{prop:ClassFunctionRelation}):
$$
\mathds{1}_{\OO_{m+1}}= \frac{1}{2} (\chi_0-\chi_s).
$$
So Lemma \ref{lem:character2}  implies  (recall that we shorten $1-\rho =(1-\rho) \mathds{1}_{\OO_0}$)
$$
2f_{E/F} = 2n (1-\rho) \mathds{1}_{\OO_0} +  (1-\rho)\chi_0 -(1-\rho)\chi_s.
$$
Therefore one has by Lemma \ref{lem:character1}
$$
L(s, \chi_{E/F})^2 = L(s, \chi_{E^c/F^c}) \frac{L(s, \chi_{\kay/\Q})}{L(s, \chi_{\kay_{\rho\tau}/\Q})}.
$$
\end{proof}

\section{Class functions associated to CM types} \label{sect:ClassFunction}

In this section, we will compute the class function $A_{S}^0 =A_{\Phi_S}^0$ associated to the CM types $\Phi_S$ of $E$, and the log derivatives $Z(s, A_{S}^0)$ of the associated $L$-functions. Due to the slight difference between the odd and even cases, we do it separately to make exposition clearer.

\subsection{The general decomposition in the odd case}
In this subsection, we assume that $n=2m+1$ is odd. Recall that $G^c =G \times \langle \rho \rangle $ with $G=\Gal(E^c/\kay) = \langle \sigma, \tau \rangle \cong D_{2n}$.  In this case,  $G=D_{2n}$ has $m+2$ conjugacy classes given by (\ref{eq:OddOrbit}).  The Hecke algebra $H^0(G)$ of class functions on $G$ has an inner product
 $$
 \langle f,h\rangle=\frac{1}{|D_{2n}|}\sum_{g\in D_{2n}}f(g)\overline{h(g)}.
 $$
 It has two canonical orthogonal bases, one given by characters of irreducible representations of $G$, and the other given by the characteristic functions of conjugacy classes:
\begin{eqnarray}
f_{\OO_0}&=&\sqrt{2n}\mathds{1}_{\OO_0}, \notag\\
f_{\OO_k}&=&\sqrt{n}\mathds{1}_{\OO_k},\ (1\leq k\leq m),\\
f_{\OO_{m+1}}&=&\sqrt{2}\mathds{1}_{\OO_{m+1}}. \notag
\end{eqnarray}
The characters of irreducible representations of $G$ are given in Table \ref{character}.
\begin{table}[htp!]
\caption{character table of $D_{2n}$---the odd case}
\begin{tabular}{c|ccccc}\label{character}
& $\OO_0$ &$\cdots$&$\OO_k$&$\cdots$&$\OO_{m+1}$\\
\hline
$\chi_0$&1&$\cdots$&1&$\cdots$&1\\
$\chi_1$&2&$\cdots$&$2\cos\frac{2\pi k}{n}$&$\cdots$&0\\
$\vdots$&$\vdots$&$\cdots$&$\vdots$&$\cdots$&$\vdots$\\
$\chi_j$&2&$\cdots$&$2\cos\frac{2\pi j k}{n}$&$\cdots$&0\\
$\vdots$&$\vdots$&$\cdots$&$\vdots$&$\cdots$&$\vdots$\\
$\chi_m$&2&$\cdots$&$2\cos\frac{2\pi m k}{n}$&$\cdots$&0\\
$\chi_s$&1&$\cdots$&1&$\cdots$&-1
\end{tabular}
\end{table}
Let $A$ be the matrix in the Table \ref{character}, $\chi$ be
the column vector $(\chi_0,\cdots,\chi_j,\cdots,\chi_s)^T$,  and $f$  be the column vector $(f_{\OO_0},\cdots,f_{\OO_{m+1}})^T$. Then  $\chi=ABf$, where $B$ is the diagonal matrix with diagonal elements $(\frac{1}{\sqrt{2n}},\frac{1}{\sqrt{n}},\cdots,\frac{1}{\sqrt{n}},\frac{1}{\sqrt{2}})$. Then $AB$ is an orthogonal matrix. So, $f=BA^T\chi$. Then $\OO=B'A^T\chi$, where $\OO$ is the column vector $(\mathds{1}_{\OO_0},\cdots,\mathds{1}_{\OO_{m+1}})^T$ and $B'$ is the diagonal matrix with diagonal elements $(\frac{1}{2n},\frac{1}{n},\cdots,\frac{1}{n},\frac{1}{2})$. So we  have proved the following proposition  (note $\chi_j=\chi_{n-j}$)

\begin{proposition} \label{prop:ClassFunctionRelation} Let the notation be as above. As class functions of $G\cong D_{2n}$, we have the following relations.
\begin{align*}
\mathds{1}_{\OO_0}&=\frac{1}{2n} \left[ \sum_{j=0}^{n-1} \chi_j + \chi_s\right],
\\
\mathds{1}_{\OO_k}&=\frac{1}{n}\left[\sum_{j=0}^{n-1} \zeta_n^{jk} \chi_j + \chi_s \right], \quad  1 \le k \le m,
\\
\mathds{1}_{\OO_{m+1}}&=\frac{1}{2}[\chi_0-\chi_s].
\end{align*}

\end{proposition}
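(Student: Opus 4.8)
The plan is to carry out the linear-algebra inversion sketched just above the statement, whose only genuine inputs are the character table of $D_{2n}$ (odd $n$) recorded in Table~\ref{character} and the first orthogonality relation for irreducible characters. Working in the Hecke algebra $H^0(G)$ of class functions on $G\cong D_{2n}$ with inner product $\langle f,h\rangle=\frac{1}{|G|}\sum_{g\in G}f(g)\overline{h(g)}$, I would first note that this space has two distinguished orthonormal bases: the irreducible characters $\chi_0,\chi_1,\dots,\chi_m,\chi_s$, which are orthonormal by the first orthogonality relation, and the rescaled indicator functions $f_{\OO_j}=\sqrt{|G|/|\OO_j|}\,\mathds{1}_{\OO_j}$ of the conjugacy classes. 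Since $|G|=2n$ with $|\OO_0|=1$, $|\OO_k|=2$ for $1\le k\le m$, and $|\OO_{m+1}|=n$, this recovers the normalizing factors $\sqrt{2n},\sqrt n,\sqrt 2$ used above.

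Next I would expand each irreducible character in the indicator basis, $\chi_i=\sum_j\chi_i(\OO_j)\,\mathds{1}_{\OO_j}$, and substitute $\mathds{1}_{\OO_j}=\sqrt{|\OO_j|/|G|}\,f_{\OO_j}$; in matrix form this reads $\chi=ABf$, where $A$ is the character table, $B=\mathrm{diag}(1/\sqrt{2n},1/\sqrt n,\dots,1/\sqrt n,1/\sqrt 2)$, $\chi=(\chi_0,\dots,\chi_s)^T$ and $f=(f_{\OO_0},\dots,f_{\OO_{m+1}})^T$. Because $AB$ carries one orthonormal basis to another and all entries are real, $AB$ is orthogonal, so $(AB)^{-1}=(AB)^T=BA^T$; hence $f=BA^T\chi$ and, multiplying by $B$, $\OO=B^2A^T\chi$ with $\OO=(\mathds{1}_{\OO_0},\dots,\mathds{1}_{\OO_{m+1}})^T$ and $B^2=\mathrm{diag}(1/2n,1/n,\dots,1/n,1/2)$. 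Reading off the $k$-th component gives $\mathds{1}_{\OO_k}=\frac{|\OO_k|}{|G|}\sum_i\chi_i(\OO_k)\chi_i$, which is just column orthogonality. It then remains to simplify: for $k=0$ use $\chi_i(\OO_0)=\dim\chi_i$ together with $\chi_0+2\sum_{j=1}^m\chi_j=\sum_{j=0}^{n-1}\chi_j$, valid since $\chi_j=\chi_{n-j}$; for $1\le k\le m$ use $\chi_j(\OO_k)=2\cos\frac{2\pi jk}{n}=\zeta_n^{jk}+\zeta_n^{-jk}$ and the same folding to turn $\sum_{j=1}^m 2\cos\frac{2\pi jk}{n}\,\chi_j$ into $\sum_{j=1}^{n-1}\zeta_n^{jk}\chi_j$; and for $k=m+1$ use $\chi_j(\OO_{m+1})=0$ for $1\le j\le m$, $\chi_0(\OO_{m+1})=1$, $\chi_s(\OO_{m+1})=-1$.

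There is no serious obstacle here: the statement is the classical column-orthogonality of the character table of $D_{2n}$, and the argument is pure bookkeeping. The only points demanding care are getting the normalization diagonal $B$ exactly right --- equivalently, the conjugacy-class sizes $1,2,n$ --- so that the scalar prefactors come out precisely as $\frac{1}{2n},\frac1n,\frac12$, and correctly folding the index range $\{1,\dots,m\}$ into $\{0,\dots,n-1\}$ via $\chi_j=\chi_{n-j}$ and $\zeta_n^{jk}+\zeta_n^{-jk}=2\cos\frac{2\pi jk}{n}$ so that the answer takes the compact form stated. One can equally well bypass the matrix language and simply verify the three displayed identities directly, by evaluating both sides on each of the $m+2$ conjugacy classes and invoking that a class function is determined by those values.
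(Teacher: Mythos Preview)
Your proposal is correct and follows essentially the same approach as the paper: the paper's argument is precisely the matrix inversion $\chi=ABf\Rightarrow f=BA^T\chi\Rightarrow \OO=B'A^T\chi$ with $B'=B^2$, together with the remark that $\chi_j=\chi_{n-j}$ to fold the sum. You have simply spelled out the final simplification step more explicitly than the paper does.
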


\begin{proposition} \label{prop:A0} Let the notation and assumptions be as above, and let $r=|S|$. Then
\begin{align*}
A_S^0&=\frac{1}2 \tr_{E^c/\kay}-\frac{r}n (1-\rho) \tr_{E^c/\kay}  + \frac{r}n (1-\rho)(\mathds{1}_{\OO_0} + \frac{1}n \mathds{1}_{\OO_{m+1}})
\\
&\qquad  +\frac{1}n(1-\rho)\sum_{1 \le j \le m} a_j(S)  \mathds{1}_{\OO_j} + \frac{r (r-1)}{n^2} (1-\rho) \mathds{1}_{\OO_{m+1}}.
\end{align*}
Here
$$
a_j(S)= |\{ (i, k) \in S^2:\,   i-k \equiv j \pmod n\}|.
$$
\end{proposition}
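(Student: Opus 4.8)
The plan is to carry out the whole computation inside the group ring $\Q[G^c]$, where $G^c=\Gal(E^c/\Q)=G\times\langle\rho\rangle$, and to project onto class functions only at the very end. Since $A_\Phi$ depends only on $\Phi$ as a function on $G^{\CM}$, we may replace $E$ by $E^c$ and compute $A_{\Phi_S}=\frac{1}{[E^c:\Q]}\,\Phi_S^c\,\widetilde{\Phi_S^c}$, where $\Phi_S^c=\{g\in G^c:\ g|_E\in\Phi_S\}$ is the CM type of $E^c$ lifting $\Phi_S$ and $\widetilde{\Phi_S^c}=\sum_{g\in\Phi_S^c}g^{-1}$. First I would record the explicit formal sums: writing $T=\tr_{E^c/\kay}=\sum_{g\in G}g=\bigl(\sum_{i\in\Z/n}\sigma^i\bigr)(1+\tau)$ and $P_S=\sum_{i\in S}\sigma^i$, the description of $\Phi_S$ (as cosets of $\langle\tau\rangle=\Gal(E^c/E)$) gives $\Phi_S^c=T+(\rho-1)P_S(1+\tau)$, and a short manipulation using $\tau\sigma^{-i}=\sigma^i\tau$ and the centrality of $\rho$ gives $\widetilde{\Phi_S^c}=T+(\rho-1)(1+\tau)P_{-S}$, with $P_{-S}=\sum_{i\in S}\sigma^{-i}$.

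Next I would multiply these out. The relations $T^2=|G|\,T=2nT$, $T(1+\tau)=(1+\tau)T=2T$, $\sigma^iT=T$, $(\rho-1)^2=2(1-\rho)$, $(1+\tau)^2=2(1+\tau)$, together with the centrality of $\rho$, collapse the two cross terms $T\cdot(\rho-1)(1+\tau)P_{-S}$ and $(\rho-1)P_S(1+\tau)\cdot T$ each to $2r(\rho-1)T$ with $r=|S|$, and the top term $(\rho-1)P_S(1+\tau)(\rho-1)(1+\tau)P_{-S}$ to $4(1-\rho)\,P_S(1+\tau)P_{-S}$. Dividing by $[E^c:\Q]=4n$ yields
$$
A_{\Phi_S}=\tfrac12\,\tr_{E^c/\kay}-\tfrac{r}{n}(1-\rho)\tr_{E^c/\kay}+\tfrac1n(1-\rho)\,P_S(1+\tau)P_{-S}.
$$

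Finally I would project onto $H^0(G^c)$. The first two terms are already class functions ($T$ is $G^c$-central, $1-\rho$ is central), so only $(P_S(1+\tau)P_{-S})^0$ is left. Here $P_SP_{-S}=\sum_{j\in\Z/n}a_j(S)\sigma^j$ with $a_j(S)$ as in the statement; using $a_{-j}(S)=a_j(S)$ and the conjugacy classes $\OO_0=\{1\}$, $\OO_j=\{\sigma^{\pm j}\}$ ($1\le j\le m$) one gets $(P_SP_{-S})^0=r\,\mathds{1}_{\OO_0}+\sum_{1\le j\le m}a_j(S)\,\mathds{1}_{\OO_j}$, while $P_S\tau P_{-S}=\sum_{i,k\in S}\sigma^{i+k}\tau$ and — since for $n$ odd all $\sigma^l\tau$ form the single class $\OO_{m+1}$ of size $n$ — $(P_S\tau P_{-S})^0=\tfrac{r^2}{n}\mathds{1}_{\OO_{m+1}}$. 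Substituting and splitting $\tfrac{r^2}{n^2}=\tfrac{r}{n}\cdot\tfrac1n+\tfrac{r(r-1)}{n^2}$ gives exactly the asserted formula. (Alternatively one can feed $(P_SP_{-S})^0$ and $(P_S\tau P_{-S})^0$ through Proposition \ref{prop:ClassFunctionRelation} to rewrite $A_S^0$ in terms of the irreducible characters, which is what will be needed afterwards for $Z(s,A_S^0)$, but it is not needed for this proposition.)

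There is no conceptual obstacle here; the only care required is bookkeeping — getting the normalization $\tfrac1{4n}$ right when passing from $E$ to $E^c$, forming the inverse $\widetilde{\Phi_S^c}$ correctly (the factor $1+\tau$ must move to the left of $P_{-S}$), and in the projection step using $a_{-j}(S)=a_j(S)$ and the fact that $\OO_{m+1}$ is a single conjugacy class in the odd case, so that the sums over $\sigma^{\pm j}$ and over all $\sigma^l\tau$ collapse to the clean coefficients above.
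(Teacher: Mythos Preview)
Your proof is correct and follows essentially the same route as the paper's own argument: write $\Phi_S^c=\tr_{E^c/\kay}+(\rho-1)P_S(1+\tau)$ and its tilde, multiply out in $\Q[G^c]$ to obtain $A_{\Phi_S}=\tfrac12\tr_{E^c/\kay}-\tfrac{r}{n}(1-\rho)\tr_{E^c/\kay}+\tfrac1n(1-\rho)\,P_S(1+\tau)P_{-S}$, and then project the last factor using $(P_SP_{-S})^0=r\,\mathds{1}_{\OO_0}+\sum_{1\le j\le m}a_j(S)\,\mathds{1}_{\OO_j}$ and $(P_S\tau P_{-S})^0=\tfrac{r^2}{n}\mathds{1}_{\OO_{m+1}}$. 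The paper packages $P_S(1+\tau)P_{-S}$ into a single symbol $b$ and notes $P_S\tau P_{-S}=P_S^2\tau$, but the computations are identical; your explicit listing of the group-ring identities used in the expansion is a welcome addition.
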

\begin{proof} Let $\Phi_S^c$ be the extension of $\Phi_S$ to $E^c$. By definition, tt is easy to check that
\begin{equation*}
\Phi_S^c=\tr_{E^c/\kay}+(\rho-1)(\sum_{i\in S}\sigma^i)(1+\tau)
\end{equation*}
and
\begin{equation*}
\tilde{\Phi}_S^c=\tr_{E^c/\kay}+(\rho-1)(1+\tau)(\sum_{i\in S}\sigma^{-i}).
\end{equation*}
Let
\begin{eqnarray*}
a&=&(\sum_{i\in S}\sigma^i)(1+\tau)+(1+\tau)(\sum_{i\in S}\sigma^{-i})
\end{eqnarray*}
and
\begin{eqnarray*}
b&=&(\sum_{i\in S}\sigma^i)(1+\tau)(\sum_{i\in S}\sigma^{-i})
\\
&=&|S| \mathds{1}_{\OO_0}+\sum_{1 \le j \le m} a_j(S) \mathds{1}_{\OO_j} +(\sum_{i\in S}\sigma^i)^2\tau.
\end{eqnarray*}
Since
$$
(\tr_{E^c/\kay})\sigma=(\tr_{E^c/\kay})\tau=\tr_{E^c/\kay}=\sum \sigma^j + \sum \sigma^j \tau,
$$
one has
\begin{align*}
A_{\Phi_S^c}&=\frac{1}{[E^c:\Q]}\Phi_S^c\tilde{\Phi}_S^c\\
&=\frac{1}{2}\tr_{E^c/\kay}+\frac{1}{4n}(\rho-1) \tr_{E^c/\kay}a+\frac{1}{n}(1-\rho)b\\
&=\frac{1}{2}\tr_{E^c/\kay}+\frac{r}{n}(\rho-1)\tr_{E^c/\kay}+\frac{1}{n}(1-\rho)b.
\end{align*}
Direct calculations give
\begin{align*}
b^0&=|S|\mathds{1}_{\OO_0} +\sum_{1\le j \le m}a_j(S) \mathds{1}_{\OO_j}+ \frac{r^2}{n}\mathds{1}_{\OO_{m+1}}.
\end{align*}
Since $1-\rho$ is a class function, one has $ ((1-\rho)b)^0=(1-\rho)b^0$. So
\begin{equation}
A_S^0 = \frac{1}{2}\tr_{E^c/\kay}+\frac{r}{n}(\rho-1)\tr_{E^c/\kay}+\frac{1}{n}(1-\rho) b^0.
\end{equation}
 The proposition is  now clear.
\end{proof}

Now we have the main formula in the odd case for the $L$-function part of the Colmez conjecture.

\begin{theorem} \label{theo:ZetaOdd} Assume that $n=[F:\Q]=2m+1$ is odd. Then for any subset $S$ of $\Z/n$ of order $r$, one has
\begin{align*}
&Z(s, A_{S}^0)
\\
&=\frac{r}{n^2} Z(s, \chi_{E/F}) + \frac{1}4 Z(s, \zeta_\kay) - \frac{r(n-r+1)}{n^2} Z(s, \chi_{\kay/\Q})
\\
&\qquad + \frac{1}{{n^2}} \sum_{\substack{1 \le j \le m \\ 1 \le k \le n-1 }} a_j(S) \zeta_n^{kj} Z(s, \mu_k^c \psi_\rho) .
\end{align*}
\end{theorem}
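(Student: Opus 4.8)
The plan is to substitute the explicit decomposition of $A_S^0$ from Proposition~\ref{prop:A0} into the functional $A\mapsto Z(s,A)$ and rewrite each resulting term using the character identities of Lemmas~\ref{lem:character1} and~\ref{lem:character2}. First I would regroup Proposition~\ref{prop:A0} into the form
\[
A_S^0=\frac12\tr_{E^c/\kay}-\frac rn(1-\rho)\tr_{E^c/\kay}+\frac{r}{n^2}f_{E/F}+\frac{r(r-1)}{n^2}(1-\rho)\mathds{1}_{\OO_{m+1}}+\frac1n\sum_{1\le j\le m}a_j(S)(1-\rho)\mathds{1}_{\OO_j},
\]
where I have used $f_{E/F}=(1-\rho)(n\mathds{1}_{\OO_0}+\mathds{1}_{\OO_{m+1}})$ from Lemma~\ref{lem:character2} to absorb the $\mathds{1}_{\OO_0}$ contribution and part of the $\mathds{1}_{\OO_{m+1}}$ contribution. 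Since Artin $L$-functions and Artin conductors are multiplicative under direct sums, $A\mapsto Z(s,A)$ is $\C$-linear on the space of class functions of $G^c$, so it suffices to evaluate $Z(s,\cdot)$ on each of the five summands.

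For the first two summands, Lemma~\ref{lem:character1}(1) gives $2\tr_{E^c/\kay}=\chi_{\Ind_G^{G^c}\chi_0}$, whose Artin $L$-function is $\zeta_\kay$, so $Z(s,\tr_{E^c/\kay})=\tfrac12 Z(s,\zeta_\kay)$, while Lemma~\ref{lem:character1}(2) gives $(1-\rho)\tr_{E^c/\kay}=(1-\rho)\chi_0=\chi_{\kay/\Q}$; likewise $Z(s,f_{E/F})=Z(s,\chi_{E/F})$ since $f_{E/F}$ is the character of $\Ind_{\Gal(E^c/F)}^{G^c}\chi_{E/F}$. For the two orbit indicators I would plug in the expansions of Proposition~\ref{prop:ClassFunctionRelation}, which in the odd case read (after using $\chi_j=\chi_{n-j}$) $\mathds{1}_{\OO_{m+1}}=\tfrac12(\chi_0-\chi_s)$ and $\mathds{1}_{\OO_j}=\tfrac1n\big(\chi_0+\chi_s+\sum_{k=1}^{n-1}\zeta_n^{kj}\chi_k\big)$, multiply through by $1-\rho$, and apply Lemma~\ref{lem:character1}(2): $(1-\rho)\chi_0=\chi_{\kay/\Q}$, $(1-\rho)\chi_s=\chi_{\kay_{\rho\tau}/\Q}$, and $(1-\rho)\chi_k=\chi_{\Ind_{H^c}^{G^c}\mu_k^c\psi_\rho}$, the last having Artin $L$-function equal to the Hecke $L$-function $L(s,\mu_k^c\psi_\rho)$ of $\kay_\tau=(F^c)^\sigma$ (for $m<k\le n-1$ one invokes $\chi_k=\chi_{n-k}$, so that $Z(s,\mu_k^c\psi_\rho)=Z(s,\mu_{n-k}^c\psi_\rho)$ and the labelling in the theorem is unambiguous). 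At this point $Z(s,A_S^0)$ has become an explicit linear combination of $Z(s,\zeta_\kay)$, $Z(s,\chi_{E/F})$, $Z(s,\chi_{\kay/\Q})$, $Z(s,\chi_{\kay_{\rho\tau}/\Q})$ and the $Z(s,\mu_k^c\psi_\rho)$.

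It then remains to collect coefficients, and the only step needing genuine care is the combinatorics of $a_j(S)$. The key elementary identity is
\[
\sum_{1\le j\le m}a_j(S)=\binom r2=\frac{r(r-1)}2,
\]
which follows since $\sum_{j\in\Z/n}a_j(S)=r^2$, $a_0(S)=r$, and $a_j(S)=a_{n-j}(S)$, so that the $n-1$ nonzero indices split into $m$ matching pairs (this is where one uses that $n$ is odd, which is also why the even case is treated separately). Feeding this in: the coefficient of $Z(s,\chi_{\kay_{\rho\tau}/\Q})$ becomes $-\tfrac{r(r-1)}{2n^2}+\tfrac1{n^2}\sum_{1\le j\le m}a_j(S)=0$, so that term disappears as the statement requires; the coefficient of $Z(s,\chi_{\kay/\Q})$ collapses via $-\tfrac rn+\tfrac{r(r-1)}{2n^2}+\tfrac1{n^2}\sum_{1\le j\le m}a_j(S)=-\tfrac rn+\tfrac{r(r-1)}{n^2}=-\tfrac{r(n-r+1)}{n^2}$; the coefficients of $Z(s,\zeta_\kay)$ and $Z(s,\chi_{E/F})$ are read off directly as $\tfrac14$ and $\tfrac r{n^2}$; and the $\mu_k^c\psi_\rho$ part assembles into $\tfrac1{n^2}\sum_{1\le j\le m}\sum_{1\le k\le n-1}a_j(S)\zeta_n^{kj}Z(s,\mu_k^c\psi_\rho)$, matching the double sum in the theorem. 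I expect this last bookkeeping — in particular the exact cancellation of $\chi_{\kay_{\rho\tau}/\Q}$ and the collapse of the $\chi_{\kay/\Q}$-coefficient — to be the only real content; everything else is direct substitution from the quoted lemmas.
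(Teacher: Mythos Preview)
Your proposal is correct and follows essentially the same route as the paper: substitute the decomposition of $A_S^0$ from Proposition~\ref{prop:A0}, expand the orbit indicators via Proposition~\ref{prop:ClassFunctionRelation}, translate to $L$-functions with Lemmas~\ref{lem:character1} and~\ref{lem:character2}, and use the combinatorial identity $\sum_{1\le j\le m}a_j(S)=\binom r2$ to cancel the $\chi_{\kay_{\rho\tau}/\Q}$ contribution and simplify the $\chi_{\kay/\Q}$ coefficient. The paper's proof is terser but records exactly the same intermediate expression and the same key identity; your write-up simply spells out the bookkeeping that the paper leaves as ``simple calculation''.
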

\begin{proof}  By Propositions \ref{prop:ClassFunctionRelation},  \ref{prop:A0},   and Lemmas \ref{lem:character1},  \ref{lem:character2}, one has
\begin{align*}
Z(s, A_S^0) &=\frac{1}4 Z(s, \zeta_\kay) -\frac{r}{n} Z(s,\chi_{\kay/\Q}) + \frac{r}{n^2} Z(s, \chi_{E/F})
\\
 &\qquad + \frac{1}{n^2} \sum_{ 1 \le j \le m} a_j(S) \left[  Z(s, \chi_{\kay/\Q}) + \sum_{k=1}^{n-1}\zeta_n^{kj}Z(s, \mu_k^c\psi_\rho) + Z(s, \chi_{\kay_{\rho \tau}/\Q})  \right]
 \\
 &\qquad + \frac{r(r-1)}{2n^2}\left[ Z(s, \chi_{\kay/\Q}) - Z(s, \chi_{\kay_{\rho \tau}/\Q}) \right].
\end{align*}
It is easy to see that
$$
\sum_{1 \le j \le m} a_j(S) = \frac{r(r-1)}2.
$$
Now  the theorem follows from simple calculation.

\end{proof}

\subsection{The even case}
In this subsection, we assume that $n=2m$ is even.  In this case,  $G$ has $m+3$ conjugacy classes given by  $\OO_0=\{1\}$, $\OO_k=\{\sigma^k,\sigma^{-k}\}$ ($1\leq k\leq m-1$), $\OO_m=\{\sigma^m\}$,  and
$\OO_{m+1,i}=\{\sigma^{i+2j}\tau:\,  0\le j \le m-1\}$, $i=0, 1$.  The irreducible characters of $G$ are given by $\chi_0, \chi_s, \chi_{m, 0}, \chi_{m, 1}$ and $\chi_j$,  $1\le j \le m-1$ as described in Section \ref{sect:Dihedral}.  For convenience, we let  $\OO_{m+1} = \OO_{m+1,0} \cup \OO_{m+1,1}$  and
$\chi_m=\chi_{m, 0} + \chi_{m, 1}$.
The character table of irreducible representations of $G$ is given as Table 2.

\begin{table}[htp]  \label{Table2}
\caption{character table of $D_{2n}$---even case}
\begin{tabular}{c|cccccc}\label{character1}
& $\OO_0$ &$\cdots$&$\OO_k, 1 \le k \le m-1$&$\cdots$&$\OO_{m+1,0}$&$\OO_{m+1,1}$\\
\hline
$\chi_0$&1&$\cdots$&1&$\cdots$&1&1\\
$\chi_1$&2&$\cdots$&$2\cos\frac{2\pi k}{n}$&$\cdots$&0&0\\
$\vdots$&$\vdots$&$\cdots$&$\vdots$&$\cdots$&$\vdots$&\vdots\\
$\chi_{j}$&2&$\cdots$&$2\cos\frac{2\pi j k}{n}$&$\cdots$&0&0\\
$\vdots$&$\vdots$&$\cdots$&$\vdots$&$\cdots$&$\vdots$&\vdots\\
$\chi_{m-1}$&2&$\cdots$&$2\cos\frac{2\pi(m-1)k}{n}$&$\cdots$&0&0\\
$\chi_{m,0}$&1&$\cdots$&$(-1)^k$&$\cdots$&1&-1\\
$\chi_{m,1}$&1&$\cdots$&$(-1)^k$&$\cdots$&-1&1\\
$\chi_s$&1&$\cdots$&1&$\cdots$&-1&-1
\end{tabular}
\end{table}

The same argument as in the odd case gives the following proposition.

\begin{proposition}  \label{prop:ClassFunctionEven} We have the following relations among class functions on $G=D_{2n}$.
\begin{align*}
\mathds{1}_{\OO_k}&=\frac{\delta_k}{n}\left[ \sum_{j=0}^{n-1} \zeta_n^{jk}  \chi_j + \chi_s\right],
\\
\mathds{1}_{\OO_{m+1,0}}&=\frac{1}{4}\left[\chi_0+\chi_{m,0}-\chi_{m,1}-\chi_s\right],
\\
\mathds{1}_{\OO_{m+1,1}}&=\frac{1}{4}\left[\chi_0-\chi_{m,0}+\chi_{m,1}-\chi_s\right].
\end{align*}
Here
$$
\delta_k =\begin{cases}
  1 &\ff  1 \le k \le m-1,
  \\
  \frac{1}2 &\ff k=0, m.
  \end{cases}
$$
\end{proposition}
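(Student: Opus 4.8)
The plan is to reproduce verbatim the orthogonal-change-of-basis argument of Proposition \ref{prop:ClassFunctionRelation}, now with the irreducible characters and conjugacy classes of the even dihedral group $D_{2n}$, $n=2m$. First I would record the two orthonormal bases of the Hecke algebra $H^0(D_{2n})$ for the inner product $\langle f,h\rangle=\frac1{2n}\sum_{g\in D_{2n}}f(g)\overline{h(g)}$: the irreducible characters $\chi_0,\chi_1,\dots,\chi_{m-1},\chi_{m,0},\chi_{m,1},\chi_s$ (orthonormal by the orthogonality relations and a complete list by the classification of irreducible characters of $D_{2n}$ recalled above), and the normalized indicator functions $f_\OO=\sqrt{|D_{2n}|/|\OO|}\,\mathds{1}_\OO$ of the conjugacy classes, i.e. $f_{\OO_0}=\sqrt{2n}\,\mathds{1}_{\OO_0}$, $f_{\OO_k}=\sqrt n\,\mathds{1}_{\OO_k}$ for $1\le k\le m-1$, $f_{\OO_m}=\sqrt{2n}\,\mathds{1}_{\OO_m}$, and $f_{\OO_{m+1,i}}=2\,\mathds{1}_{\OO_{m+1,i}}$ for $i=0,1$.

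Next I would let $A$ be the character table of Table \ref{character1} (rows indexed by the irreducible characters, columns by $\OO_0,\OO_1,\dots,\OO_{m-1},\OO_m,\OO_{m+1,0},\OO_{m+1,1}$), let $\chi$ and $\OO$ denote the column vectors of irreducible characters and of indicator functions $\mathds{1}_\OO$, and let $B=\mathrm{diag}\bigl(\tfrac1{\sqrt{2n}},\tfrac1{\sqrt n},\dots,\tfrac1{\sqrt n},\tfrac1{\sqrt{2n}},\tfrac12,\tfrac12\bigr)$, so that $\OO=Bf$ and $\chi=A\OO=(AB)f$. Exactly as in the odd case, $AB$ is the transition matrix between two orthonormal bases and hence orthogonal, so $f=(AB)^T\chi=BA^T\chi$ and therefore $\OO=Bf=B^2A^T\chi$ with $B^2=\mathrm{diag}\bigl(\tfrac1{2n},\tfrac1n,\dots,\tfrac1n,\tfrac1{2n},\tfrac14,\tfrac14\bigr)$. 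Reading off the rows of $B^2A^T$ — using that the $\OO_k$-column of $A$ is $(1,2\cos\tfrac{2\pi k}{n},\dots,2\cos\tfrac{2\pi(m-1)k}{n},(-1)^k,(-1)^k,1)^T$ for $0\le k\le m$, and that the $\OO_{m+1,0}$- and $\OO_{m+1,1}$-columns are $(1,0,\dots,0,1,-1,-1)^T$ and $(1,0,\dots,0,-1,1,-1)^T$ — would give the two displayed formulas for $\mathds{1}_{\OO_{m+1,0}}$ and $\mathds{1}_{\OO_{m+1,1}}$ at once, together with
$$
\mathds{1}_{\OO_k}=\frac{\delta_k}{n}\Bigl[\chi_0+\sum_{j=1}^{m-1}2\cos\tfrac{2\pi jk}{n}\,\chi_j+(-1)^k\chi_{m,0}+(-1)^k\chi_{m,1}+\chi_s\Bigr],\qquad 0\le k\le m,
$$
the factor $\delta_k=\tfrac12$ at $k=0,m$ arising precisely from the two $\tfrac1{2n}$-entries of $B^2$.

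Finally I would rewrite this bracket into the claimed compact form $\sum_{j=0}^{n-1}\zeta_n^{jk}\chi_j+\chi_s$, using the conventions $\chi_j=\chi_{n-j}$ and $\chi_m:=\chi_{m,0}+\chi_{m,1}$ together with $\zeta_n^{jk}+\zeta_n^{-jk}=2\cos\tfrac{2\pi jk}{n}$ and $\zeta_n^{mk}=(-1)^k$ (since $\zeta_n^m=-1$): pairing the terms $j$ and $n-j$ for $1\le j\le m-1$ and isolating the $j=0$ and $j=m$ terms recovers the bracket exactly. The whole computation is mechanical and, consistent with the authors' own remark that ``the same argument as in the odd case'' applies, I do not expect a genuine obstacle; the only points requiring care are the orthogonality input that forces $AB$ to be orthogonal (verbatim from the odd case) and keeping track of the factor $\delta_k$ coming from the two singleton central classes $\OO_0$ and $\OO_m$.
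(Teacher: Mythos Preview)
Your proposal is correct and follows exactly the approach the paper intends: the paper merely states ``the same argument as in the odd case gives the following proposition,'' and you have carried out precisely that orthogonal change-of-basis computation in full detail, including the correct normalizations for the two central singleton classes $\OO_0$, $\OO_m$ (producing $\delta_k$) and the rewriting via $\chi_j=\chi_{n-j}$, $\chi_m=\chi_{m,0}+\chi_{m,1}$.
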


\begin{proposition} \label{prop:A0Even} Let the notation and assumptions be as above, and let $r=|S|$. Then
\begin{align*}
A_{S}^0&=\frac{1}2 \tr_{E^c/\kay}-\frac{r}n (1-\rho) \tr_{E^c/\kay}  + \frac{r}{n} (1-\rho) \mathds{1}_{\OO_0}
\\
&\qquad  +\frac{r}n(1-\rho)(\sum_{1 \le j \le m-1} a_j(S)  \mathds{1}_{\OO_j}+2a_m(S)\mathds{1}_{\OO_m}) +
\frac{2}{n^2} \sum_{i=0}^1 b_i(S) (1-\rho)\mathds{1}_{\OO_{m+1,i}}.
\end{align*}
Here for $1\leq j\leq m=\frac{n}2$
$$
a_j(S)=\begin{cases}
   |\{ (i, k) \in S^2:\,   i-k \equiv j \pmod n\}| &\ff j \ne m,
   \\
   \frac{1}2  |\{ (i, k) \in S^2:\,   i-k \equiv j \pmod n\}| &\ff j =m,
   \end{cases}
$$
and for $i =0, 1$
$$
b_i(S)=|\{(j,k)\in S^2:\, j+k  \equiv i  \pmod 2\}|.
$$
\end{proposition}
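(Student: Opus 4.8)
The plan is to imitate the proof of Proposition~\ref{prop:A0}, the one essential novelty being the final projection step. First I would extend $\Phi_S$ to the CM type $\Phi_S^c$ of $E^c$; exactly as in the odd case one has the group-ring identities
$$
\Phi_S^c=\tr_{E^c/\kay}+(\rho-1)\Bigl(\sum_{i\in S}\sigma^i\Bigr)(1+\tau),\qquad
\tilde\Phi_S^c=\tr_{E^c/\kay}+(\rho-1)(1+\tau)\Bigl(\sum_{i\in S}\sigma^{-i}\Bigr).
$$
Multiplying these inside $\C[G^c]$ and using $(\tr_{E^c/\kay})^2=2n\,\tr_{E^c/\kay}$, the centrality of $\tr_{E^c/\kay}$ and $\rho$, the relations $(\rho-1)^2=2(1-\rho)$ and $(1+\tau)^2=2(1+\tau)$, and the fact that $\tr_{E^c/\kay}\cdot y$ equals $(\text{sum of coefficients of }y)\cdot\tr_{E^c/\kay}$ for $y\in\C[G]$, one gets
$$
A_{\Phi_S^c}=\frac{1}{[E^c:\Q]}\Phi_S^c\tilde\Phi_S^c=\frac12\tr_{E^c/\kay}+\frac rn(\rho-1)\tr_{E^c/\kay}+\frac1n(1-\rho)b,\qquad b=\Bigl(\sum_{i\in S}\sigma^i\Bigr)(1+\tau)\Bigl(\sum_{i\in S}\sigma^{-i}\Bigr),
$$
with $r=|S|$. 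This part is word for word the odd-case computation and does not see the parity of $n$.

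Next I would expand $b$ using $\tau\sigma^{-k}=\sigma^k\tau$, obtaining $b=\sum_{i,k\in S}\sigma^{i-k}+\bigl(\sum_{i\in S}\sigma^i\bigr)^2\tau$, and then project each term onto the class functions of $G$. The diagonal $i=k$ contributes $r\,\mathds{1}_{\OO_0}$. For the off-diagonal rotations, the point where the even case departs from the odd one is that for $1\le j\le m-1$ the powers $\sigma^{\pm j}$ still fuse into the two-element class $\OO_j$ with total coefficient $a_j(S)$, whereas the remaining power $\sigma^m$ is central and hence forms the singleton class $\OO_m$, which is why it enters with coefficient $2a_m(S)$ in the normalization of the statement. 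For the reflection part, since the two reflection classes $\OO_{m+1,0}$ and $\OO_{m+1,1}$ each have $\tfrac n2$ elements and $\sigma^{i+k}\tau$ lies in $\OO_{m+1,\,(i+k)\bmod 2}$, one has $(\sigma^{i+k}\tau)^0=\tfrac2n\mathds{1}_{\OO_{m+1,\,(i+k)\bmod2}}$, and summing over $(i,k)\in S^2$ gives $\bigl(\bigl(\sum_{i\in S}\sigma^i\bigr)^2\tau\bigr)^0=\tfrac2n\sum_{i=0}^1 b_i(S)\,\mathds{1}_{\OO_{m+1,i}}$. Altogether
$$
b^0=r\,\mathds{1}_{\OO_0}+\sum_{j=1}^{m-1}a_j(S)\,\mathds{1}_{\OO_j}+2a_m(S)\,\mathds{1}_{\OO_m}+\frac2n\sum_{i=0}^1 b_i(S)\,\mathds{1}_{\OO_{m+1,i}}.
$$

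Finally, since $1-\rho$ is a class function on $G^c$ we have $((1-\rho)b)^0=(1-\rho)b^0$, and the only non-class-function term of $A_{\Phi_S^c}$ is $\tfrac1n(1-\rho)b$; substituting $b^0$ into $A_S^0=\tfrac12\tr_{E^c/\kay}+\tfrac rn(\rho-1)\tr_{E^c/\kay}+\tfrac1n(1-\rho)b^0$ and collecting terms yields the asserted identity. I expect the only genuinely new bookkeeping relative to Proposition~\ref{prop:A0} to be exactly the projection step just described: one must keep the singleton class $\OO_m$ separate from the classes $\OO_j$ with $j<m$, and one must track the parity of $i+k$ in order to distribute the reflection contributions correctly between $\OO_{m+1,0}$ and $\OO_{m+1,1}$. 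Everything else is a mechanical transcription of the odd-case argument, with Proposition~\ref{prop:ClassFunctionEven} available if one later wishes to re-express the answer through irreducible characters.
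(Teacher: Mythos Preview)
Your argument is correct and follows essentially the same route as the paper's proof: both derive $A_{\Phi_S^c}=\tfrac12\tr_{E^c/\kay}+\tfrac rn(\rho-1)\tr_{E^c/\kay}+\tfrac1n(1-\rho)b$ exactly as in the odd case, observe that the rotation part of $b$ is already a class function (with the singleton class $\OO_m$ contributing $2a_m(S)\mathds{1}_{\OO_m}$), and then project only the reflection piece $c=(\sum_{i\in S}\sigma^i)^2\tau$ to obtain $c^0=\tfrac2n\sum_{i=0}^1 b_i(S)\mathds{1}_{\OO_{m+1,i}}$. Your write-up is simply a more detailed version of the paper's terse ``same argument as in the odd case'' plus the computation of $c^0$; note that the coefficient your computation produces in front of $(\sum a_j(S)\mathds{1}_{\OO_j}+2a_m(S)\mathds{1}_{\OO_m})$ is $\tfrac1n$, consistent with the odd case and with Theorem~\ref{theo:ZetaEven}, whereas the $\tfrac rn$ printed in the statement and proof is a typographical slip.
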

\begin{proof} The same argument as in the odd case gives
\begin{eqnarray*}
A_S&=& \frac{1}2 \tr_{E^c/\kay} -\frac{r}n (1-\rho)\tr_{E^c/\kay} + \frac{r}{n} (1-\rho)\mathds{1}_{\OO_0}\\
&&+\frac{r}{n} (1-\rho) (\sum_{1 \le j \le m-1} a_j(S) \mathds{1}_{\OO_j}+2a_m(S)\mathds{1}_{\OO_m})
  +\frac{1}n (1-\rho) (\sum_{i\in S} \sigma^i)^2 \tau.
\end{eqnarray*}
Notice that every term  in the right han side  is a class function except $c=(\sum_{i\in S} \sigma^i)^2 \tau$. Direct calculation gives
$$
c^0 = \sum_{i=0}^1 \frac{2b_i(S)}{n} \mathds{1}_{\OO_{m+1, i}}.
$$
Now simple calculation proves the proposition.
\end{proof}

\begin{theorem} \label{theo:ZetaEven} Assume $n=2m$ is even and $|S|=r$. Then
\begin{align*}
&Z(s, A_{S}^0)
\\
&=\frac{r}{n^2} Z(s, \chi_{E^c/F^c}) + \frac{1}4 Z(s, \zeta_\kay) - \frac{r(2n-2r+1)}{2n^2} Z(s, \chi_{\kay/\Q})
\\
&\qquad + \frac{1}{{n^2}} \sum_{\substack{1 \le j \le m \\ 1 \le k <m  }}  a_j(S) (\zeta_n^{kj} + \zeta_n^{-kj}) Z(s, \mu_k^c \psi_\rho)
  + \frac{b_0(S)-b_1(S)}{ n^2} Z(s, \chi_{m, 0} \psi_\rho).
\end{align*}
\end{theorem}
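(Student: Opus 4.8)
The plan is to rerun the proof of Theorem~\ref{theo:ZetaOdd}, now with the even-case inputs. First I would start from Proposition~\ref{prop:A0Even}, which presents $A_S^0$ as a $\Z$-linear combination of $\tr_{E^c/\kay}$, $(1-\rho)\tr_{E^c/\kay}$, $(1-\rho)\mathds{1}_{\OO_0}$, the functions $(1-\rho)\mathds{1}_{\OO_j}$ for $1\le j\le m-1$, $(1-\rho)\mathds{1}_{\OO_m}$, and $(1-\rho)\mathds{1}_{\OO_{m+1,0}}$, $(1-\rho)\mathds{1}_{\OO_{m+1,1}}$, with coefficients built from $r=|S|$, the $a_j(S)$ and the $b_i(S)$. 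The two ``$\tr$'' pieces are handled at once: Lemma~\ref{lem:character1}(1) gives $Z(s,\tfrac12\tr_{E^c/\kay})=\tfrac14 Z(s,\zeta_\kay)$, and $(1-\rho)\tr_{E^c/\kay}=(1-\rho)\chi_0=\chi_{\kay/\Q}$ by Lemma~\ref{lem:character1}(2), so that piece contributes $-\tfrac rn Z(s,\chi_{\kay/\Q})$. For the $\OO_0$ term I would invoke the first identity of Lemma~\ref{lem:character2}, namely $2n(1-\rho)\mathds{1}_{\OO_0}=\chi_{\Ind_{\Gal(E^c/F^c)}^{G^c}\chi_{E^c/F^c}}$, whence $Z(s,(1-\rho)\mathds{1}_{\OO_0})=\tfrac1{2n}Z(s,\chi_{E^c/F^c})$.

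Next I would expand each of the remaining characteristic functions in the basis of irreducible characters of $D_{2n}$ by Proposition~\ref{prop:ClassFunctionEven}: for $1\le j\le m-1$, $\mathds{1}_{\OO_j}=\tfrac1n\bigl[\chi_0+\chi_s+(-1)^j(\chi_{m,0}+\chi_{m,1})+\sum_{1\le l\le m-1}(\zeta_n^{lj}+\zeta_n^{-lj})\chi_l\bigr]$, a parallel identity for $\mathds{1}_{\OO_m}$ with overall factor $\tfrac1{2n}$ (using $\zeta_n^{lm}=(-1)^l$), and $\mathds{1}_{\OO_{m+1,i}}=\tfrac14\bigl[\chi_0-\chi_s+(-1)^i(\chi_{m,0}-\chi_{m,1})\bigr]$. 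Multiplying through by $(1-\rho)$ and applying Lemma~\ref{lem:character1}(2) converts every constituent into a $Z$-value allowed in the theorem: $Z(s,(1-\rho)\chi_0)=Z(s,\chi_{\kay/\Q})$, $Z(s,(1-\rho)\chi_s)=Z(s,\chi_{\kay_{\rho\tau}/\Q})$, $Z(s,(1-\rho)\chi_{m,i})=Z(s,\chi_{m,i}\psi_\rho)$, and $Z(s,(1-\rho)\chi_l)=Z(s,\mu_l^c\psi_\rho)$ for $1\le l\le m-1$. Substituting these in expresses $Z(s,A_S^0)$ as an explicit linear combination of $Z(s,\zeta_\kay)$, $Z(s,\chi_{E^c/F^c})$, $Z(s,\chi_{\kay/\Q})$, $Z(s,\chi_{\kay_{\rho\tau}/\Q})$, the $Z(s,\mu_k^c\psi_\rho)$ for $1\le k\le m-1$, and $Z(s,\chi_{m,0}\psi_\rho)$, $Z(s,\chi_{m,1}\psi_\rho)$.

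The last step is to simplify the coefficients, and here I would use three elementary identities about $S$: $\sum_{1\le j\le m}a_j(S)=\tfrac12 r(r-1)$ (count off-diagonal ordered pairs in $S^2$, the pairs with $i-k\equiv m$ weighted by $\tfrac12$), $b_0(S)+b_1(S)=r^2$, and the parity identity $b_0(S)-b_1(S)=\bigl(\sum_{i\in S}(-1)^i\bigr)^2=r+2\sum_{1\le j\le m}(-1)^j a_j(S)$. The first two collapse the $Z(s,\zeta_\kay)$- and $Z(s,\chi_{\kay/\Q})$-coefficients to the stated $\tfrac14$ and $-\tfrac{r(2n-2r+1)}{2n^2}$ and make the $Z(s,\chi_{\kay_{\rho\tau}/\Q})$ term vanish, while the $Z(s,\mu_k^c\psi_\rho)$-coefficients come out as $\tfrac1{n^2}\sum_{1\le j\le m}a_j(S)(\zeta_n^{kj}+\zeta_n^{-kj})$ once the $\OO_m$-contribution is absorbed into the $j=m$ term via $\zeta_n^{km}+\zeta_n^{-km}=2(-1)^k$. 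I expect the one genuinely delicate point to be the pair of one-dimensional constituents $\chi_{m,0},\chi_{m,1}$: the parity identity is exactly what is needed to see that the coefficient of $Z(s,\chi_{m,1}\psi_\rho)$ (equivalently, of $Z(s,\mu_m^c\psi_\rho)=Z(s,\chi_{m,0}\psi_\rho)+Z(s,\chi_{m,1}\psi_\rho)$) cancels, leaving only $\tfrac{b_0(S)-b_1(S)}{n^2}Z(s,\chi_{m,0}\psi_\rho)$. Verifying this cancellation, and the analogous vanishing of the $\chi_{\kay_{\rho\tau}/\Q}$ term, is where the bookkeeping must be done with care; the rest is routine.
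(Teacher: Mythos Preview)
Your proposal is correct and follows essentially the same route as the paper: start from Proposition~\ref{prop:A0Even}, expand the characteristic functions via Proposition~\ref{prop:ClassFunctionEven}, translate each $(1-\rho)\chi$ into a known $Z$-value by Lemmas~\ref{lem:character1} and~\ref{lem:character2}, and then collapse the coefficients using the combinatorial identities for $\sum_j a_j(S)$, $b_0+b_1$, and $b_0-b_1$. Your version of the parity identity, $b_0(S)-b_1(S)=r+2\sum_{1\le j\le m}(-1)^j a_j(S)$, is in fact the accurate one (the paper's displayed identity drops the $+r$), and it is exactly what makes the $\chi_{m,1}\psi_\rho$ and $\chi_{\kay_{\rho\tau}/\Q}$ coefficients cancel as you anticipate.
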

\begin{proof}  By Propositions \ref{prop:ClassFunctionEven}, \ref{prop:A0Even}, and   Lemmas \ref{lem:character1}, \ref{lem:character2}, one has
\begin{align*}
&Z(s, A_S^0) -\frac{1}4 Z(s, \zeta_\kay) + \frac{r}{n} Z(s, \chi_{\kay/\Q}) -\frac{r}{n^2} Z(s, \chi_{E^c/F^c})
\\
&=\frac{1}{n^2}\sum_{1 \le j \le m}  a_j(S) \left[ Z(s, \chi_{\kay/\Q}) + Z(s, \chi_{\kay_{\rho \tau}/\Q}) + (-1)^j( Z(s, \chi_{m, 0}\psi_\rho) +Z(s, \chi_{m, 1}\psi_\rho))\right]
\\
&\qquad
  +\frac{1}{n^2}\sum_{1 \le j \le m}  a_j(S)\sum_{\substack{1\le k \le n-1\\ k \ne m} } \zeta_n^{kj} Z(s, \mu_k^c \psi_\rho)
   \\
  &\qquad + \frac{2}{4n^2}\sum_{i=0}^1  b_i(S) \left[ Z(s, \chi_{\kay/\Q}) - Z(s, \chi_{\kay_{\rho \tau}/\Q})
  +(-1)^i( Z(s, \chi_{m, 0}\psi_\rho) - Z(s, \chi_{m, 1}\psi_\rho))
  \right].
\end{align*}
It is easy to see that the last sum is equal to
$$
\frac{r^2}{2n^2} (Z(s, \chi_{\kay/\Q}) - Z(s, \chi_{\kay_{\rho \tau}/\Q})) + \frac{b_0(S) -b_1(S)}{2n^2} ( Z(s, \chi_{m, 0}\psi_\rho) - Z(s, \chi_{m, 1}\psi_\rho)).
$$
On the other hand, notice
 that
\begin{align*}
\sum_{1 \le j \le m}  a_j(S) &= \frac{r(r-1)}2,
\\
\sum_{1 \le j \le m}(-1)^j   a_j(S) &=\frac{b_0(S) -b_1(S)}2.
\end{align*}
Putting this all together proves the theorem.

\end{proof}

\section{Proof of the main results} \label{sect:Colmez}

\subsection{The Dihedral case}  In this subsection, we prove  Theorem  \ref{theo1.2} which we restate it here for convenience.

\begin{theorem} \label{theo:Dihedral} Let $F$ be a totally real number field of degree $n$ such that its Galois closure $F^c$ is  of $D_{2n}$ type
over $\Q$.   Let $\kay$ be an imaginary quadratic field, and let $E=F \kay$. For each subset $S$ of $\Z/n$ of order $r$, let $\Phi_S$ be the associated CM type of $E$ of signature $(n-r, r)$. Then the following are equivalent.
\begin{enumerate}
\item The Colmez conjecture (\ref{eq:Colmez}) holds for  all  class characters $\mu_k^c \psi_\rho$ of the real quadratic field $\kay_\tau=(F^c)^\sigma$,  $1 \le k < \frac{n}2$.

  \item The Colmez conjecture   (\ref{eq:Colmez2}) holds for all CM types $\Phi_S$ of $E$.

\item The Colmez conjecture (\ref{eq:Colmez2}) holds for  all CM types $\Phi_{\{0, i\}}$ with $1 \le i <\frac{n}2$.

\end{enumerate}
\end{theorem}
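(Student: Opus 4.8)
By Colmez's theorem (\ref{eq:Colmez1}) one has $h_{\Fal}(\Phi_S)=-h(A_S^0)$, with $h$ a $\C$-linear functional on $H^{00}(G^{\CM})$, and $(A_S^0)^\vee=A_S^0$; hence (\ref{eq:Colmez2}) for $\Phi_S$ is precisely the vanishing of the linear ``defect'' functional
\[
\delta(f):=h(f)-Z(0,f^\vee),\qquad f\in H^{00}(G^{\CM}),
\]
at $A_S^0$, and for any self-dual $f$ the assertion ``the Colmez conjecture (\ref{eq:Colmez}) holds for $f$'' reads $\delta(f)=0$. The plan is to run the formulas of Theorems \ref{theo:ZetaOdd} and \ref{theo:ZetaEven} through $\delta$. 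Each class function appearing in those formulas other than the $\mu_k^c\psi_\rho$ is, up to the abelian combination $\zeta_\kay=\mathds 1_{G^{\CM}}+\chi_{\kay/\Q}$, either an Artin character of an abelian CM field (so $\delta=0$ by the Colmez--Obus abelian case) or the character $f_{E/F}$, resp.\ $f_{E^c/F^c}$, of a quadratic CM Hecke character (so $\delta=0$ by Proposition \ref{prop:AverageColmez}). Writing $\nu_k:=\mu_k^c\psi_\rho$, which by Lemma \ref{lem:character1} corresponds to the self-dual class function $(1-\rho)\chi_k=\chi_{\Ind_{H^c}^{G^c}\mu_k^c\psi_\rho}\in H^{00}$ and is, since $\chi_k=\chi_{n-k}$, unchanged under $k\mapsto n-k$ (so that the range $1\le k<n/2$ of (1) already exhausts them), Theorems \ref{theo:ZetaOdd} and \ref{theo:ZetaEven} thus collapse — after deleting the $\delta$-trivial terms and, in the odd case, folding $k\leftrightarrow n-k$ — to the single master identity
\[
\delta(A_S^0)=\frac{1}{n^2}\sum_{1\le j\le m}a_j(S)\sum_{1\le k<n/2}\bigl(\zeta_n^{kj}+\zeta_n^{-kj}\bigr)\delta(\nu_k),\qquad S\subseteq\Z/n .
\]

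Granting this, $(1)\Rightarrow(2)$ is immediate: if $\delta(\nu_k)=0$ for all $1\le k<n/2$, then $\delta(A_S^0)=0$ for every $S$, which is (\ref{eq:Colmez2}) for every $\Phi_S$; and $(2)\Rightarrow(3)$ is trivial. The substance is $(3)\Rightarrow(1)$. Specializing $S=\{0,i\}$ and counting differences in $\{0,i\}^2$ gives $a_j(\{0,i\})=\delta_{ij}$ for $1\le j<n/2$, together with $a_{n/2}(\{0,i\})=\delta_{i,n/2}$ when $n$ is even, so that
\[
\delta(A_{\{0,i\}}^0)=\frac{2}{n^2}\sum_{1\le k<n/2}\cos\tfrac{2\pi ki}{n}\,\delta(\nu_k)\qquad(1\le i<n/2),
\]
and, for $n$ even, also $\delta(A_{\{0,n/2\}}^0)=\tfrac{2}{n^2}\sum_{1\le k<n/2}(-1)^k\delta(\nu_k)$. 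By Lemma \ref{lem3.2}(3) the types $\Phi_{\{0,i\}}$ with $1\le i\le\lfloor n/2\rfloor$ are exactly the inequivalent CM types with $|S|=2$; thus hypothesis (3) — in the form of Theorem \ref{theo1.2}(3), i.e.\ for all $|S|=2$ types — says that $\vec\delta=\bigl(\delta(\nu_k)\bigr)_{1\le k<n/2}$ lies in the kernel of the matrix $M$ with $M_{ik}=\cos\frac{2\pi ki}{n}$ and, when $n$ is even, in addition annihilates the functional $\vec x\mapsto\sum_k(-1)^k x_k$.

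What remains — the only real computation, and the main obstacle, though a mild one — is to see that this forces $\vec\delta=0$. Using $\sum_{i=1}^{n-1}\cos\frac{2\pi ki}{n}\cos\frac{2\pi k'i}{n}=\tfrac n2\delta_{kk'}-1$ and the symmetry $i\mapsto n-i$, one finds, for $n=2m+1$,
\[
MM^{\mathrm T}=\tfrac n4 I_m-\tfrac12 J_m
\]
($J_m$ the all-ones matrix), whose eigenvalues are $\tfrac14$ and $\tfrac n4$; so $M$ is invertible and $\vec\delta=0$, giving (1). For $n=2m$ the same reduction gives $MM^{\mathrm T}=\tfrac n4 I_{m-1}-\tfrac12(ww^{\mathrm T}+vv^{\mathrm T})$ with $w=(1,\dots,1)$ and $v=\bigl((-1)^k\bigr)_k$, which is invertible when $m$ is odd but, when $m$ is even, has one-dimensional kernel spanned by $x$ with $x_k=1-(-1)^k$; in that last case the extra relation coming from $\Phi_{\{0,n/2\}}$ settles it, since $\sum_k(-1)^k x_k=v\cdot x=-m\ne0$ shows that $\vec\delta$ cannot be a nonzero multiple of $x$. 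Either way $\vec\delta=0$ and (1) holds. (When $n=4$ the dihedral picture degenerates and the statement is already subsumed by the quartic case of \cite{Yang10-AJM}, \cite{YaGeneral}.) Assembling the three implications proves the theorem.
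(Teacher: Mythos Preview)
Your argument tracks the paper's exactly: Theorems \ref{theo:ZetaOdd}/\ref{theo:ZetaEven} together with the known abelian and quadratic-CM cases reduce $\delta(A_S^0)$ to a combination of the $\delta(\nu_k)$, which gives $(1)\Rightarrow(2)\Rightarrow(3)$ at once; for $(3)\Rightarrow(1)$ both you and the paper specialize to $S=\{0,i\}$ and appeal to invertibility of the resulting cosine matrix. The paper simply asserts that ``solving this system of equations, one sees $x_k=0$''; you go further and actually compute $MM^{\mathrm T}$, verifying invertibility for $n$ odd and for $n\equiv 2\pmod 4$.

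In doing so you surface a point the paper glosses over: when $n\equiv 0\pmod 4$ the $(m-1)\times(m-1)$ matrix $M=\bigl(\cos\tfrac{2\pi ik}{n}\bigr)_{1\le i,k\le m-1}$ is genuinely singular (for $n=8$, rows $1$ and $3$ are negatives of each other), so the relations coming from $1\le i<\tfrac n2$ alone do \emph{not} force all $x_k=0$. Your patch uses the extra equation from $\Phi_{\{0,n/2\}}$, but be careful: hypothesis (3) \emph{as stated in this theorem} ranges only over $1\le i<\tfrac n2$, and you obtain $\Phi_{\{0,n/2\}}$ by quietly switching to Theorem \ref{theo1.2}(3) (``all $|S|=2$''). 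Since the paper presents Theorem \ref{theo:Dihedral} as a restatement of Theorem \ref{theo1.2} this is defensible, but the substitution should be made explicit rather than slipped in; as literally stated, (3) here is strictly weaker when $4\mid n$, and neither your argument nor the paper's bridges that gap without the extra type.

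One genuine slip: the closing parenthetical about $n=4$ is wrong. Here $[E:\Q]=2n=8$, so the quartic results of \cite{Yang10-AJM}, \cite{YaGeneral} do not apply. Your own analysis already covers $n=4$ anyway (the single extra relation reads $-x_1=0$), so simply delete that sentence.
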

\begin{proof} In the even case,  $\chi_{m,0} \psi_\rho$  is a quadratic Dirichlet character associated to the imaginary quadratic subfield
of $E^c$ fixed by $\sigma^2, \tau$, and $\rho\sigma$. Notice also that $\zeta_\kay(s) = \zeta(s) L(s, \chi_{\kay/\Q})$.

Assume that $(1)$ is true. All the other characters in Theorems \ref{theo:ZetaOdd} and \ref{theo:ZetaEven} are quadratic CM characters  or the trivial character, for which the Colmez conjecture holds by Proposition \ref{prop:AverageColmez}. So we have  by Theorems \ref{theo:ZetaOdd} and \ref{theo:ZetaEven} that
$$
h_{\Fal}(\Phi_S) =- h(A_S^0) = -Z(0, A_S^0).
$$
The Colmez conjecture (\ref{eq:Colmez2}) holds for all CM types $\Phi_{S}$ of $E$.

 Claim (2) obviously implies $(3)$.

Assume that $(3)$ holds.  The same argument as above shows that  for  every subset $S$ of $\Z/n$ of order $2$ (recall that $Z(s, \mu_k^c\psi_\rho) =Z(s, \mu_{-k}^c\psi_\rho)$),  one  has
\begin{equation} \label{eq5.1}
\sum_{1 \le k <\frac{n}2 } \sum_{ 1 \le j \le m} a_j(S) (\zeta_n^{jk}+\zeta_n^{-jk}) x_k =0.
\end{equation}
Here
$$
x_k=h(A(\mu_k^c \psi_\rho)) - Z(0, \mu_k^c \psi_\rho),
$$
and $A(\mu_k^c \psi_\rho)$ is the associated class function, i.e.,  the character of  the induced representation $\Ind_{H^c}^{G^c} \mu_k^c \psi_\rho$.
For $S_i=\{0, i\}$ with $1 \le i < \frac{n}2$, one has
 $a_j(S_i) =\delta_{i, j}$. Applying this to (\ref{eq5.1}), we obtain
 $$
 \sum_{1 \le k < \frac{n}2} (\zeta_n^{ik}+ \zeta_n^{-ik}) x_k=0
 $$
for $1 \le i <\frac{n}2 $. Solving this system of equations, one sees $x_k=0$, i.e., (1) holds.
\end{proof}

We believe that $(3)$ can be replaced by the following statement: for a given  $2 \le r \le m$ with $(r-1, n)=1$, the Colmez conjecture (\ref{eq:Colmez2}) holds for  all CM types $\Phi_{S}$ with $|S|=r$.

\subsection{ The general unitary case}
In this subsection, we assume that $F$ is a totally real  number field of degree $n$ and $\kay$ is an imaginary quadratic field (viewed as a subfield of $\C$). Let $E=F\kay$  be as in the beginning of the introduction. The following theorem is  a refinement of  Theorem \ref{theo:Average1}.

\begin{theorem} \label{theo:Average} (1) \quad When $|S|=0, 1$, $n-1$, or $n$, the Colmez conjecture (2.5)  holds for $\Phi_S$. In other words, the Colmez conjecture holds for CM types of $E$ of signature $(n-1,1)$ and $(1, n-1)$, and
$$
 h_{\Fal}(\Phi_S) =-\frac{1}4 Z(0, \zeta_\kay) + \frac{1}n Z(0, \chi_{\kay/\Q}) -\frac{1}{n^2} Z(0, \chi_{E/F}).
$$

(2) \quad For any $1 < r < n-1$, the Colmez conjecture holds for the average of all CM types of $E$ of signature $(n-r, r)$. That is,
\begin{align*}
\sum_{|S|=r} h_{\Fal}(\Phi_S)
&=-\sum_{|S|=r} Z(0, A_S^0)
\\
&= -\frac{1}4 \begin{pmatrix} n \\ r \end{pmatrix} Z(0, \zeta_\kay)
  + \begin{pmatrix} n -2\\ r-1 \end{pmatrix} Z(0, \chi_{\kay/\Q})
 \\
 &\qquad  -\frac{1}{n}\begin{pmatrix} n-2 \\ r-1 \end{pmatrix}  Z(0, \chi_{E/F}).
\end{align*}
\end{theorem}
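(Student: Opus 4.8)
The plan is to deduce both statements from a single group-ring computation of $\sum_{|S|=r}A_{\Phi_S}^0$, carried out exactly as in the proof of Proposition \ref{prop:A0} but for an arbitrary totally real $F$. Let $F^c$ be the Galois closure of $F$ and $E^c=F^c\kay$; since $F^c$ is totally real and $\kay$ imaginary quadratic, $G^c:=\Gal(E^c/\Q)=G\times\langle\rho\rangle$ with $G=\Gal(E^c/\kay)\cong\Gal(F^c/\Q)$ and $\rho$ complex conjugation. Put $H=\Gal(E^c/E)\subseteq G$, so $[G:H]=n$; fix coset representatives $g_1,\dots,g_n$ of $G/H$ with $g_i|_F=\tau_i$, and set $e_H=\sum_{h\in H}h$, $e_G=\sum_{g\in G}g$. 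As in Proposition \ref{prop:A0}, the lift $\Phi_S^c$ of $\Phi_S$ to $E^c$ equals $\big(\sum_{i\notin S}g_i+\rho\sum_{i\in S}g_i\big)e_H$, and using $e_H^2=|H|e_H$, $e_Gg=ge_G=e_G$ for $g\in G$, and $(\rho-1)^2=2(1-\rho)$ one obtains, with $r=|S|$, $v_S=\sum_{i\in S}g_i$ and $\tilde v_S=\sum_{i\in S}g_i^{-1}$,
\[
A_{\Phi_S^c}=\tfrac1{2n}\Big(n\,e_G+2r(\rho-1)e_G+2(1-\rho)\,v_Se_H\,\tilde v_S\Big).
\]

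Summing over all $S$ of size $r$, the number of such $S$ containing a given ordered pair $(i,j)$ is $\binom{n-1}{r-1}$ if $i=j$ and $\binom{n-2}{r-2}$ otherwise; combined with $\big(\sum_i g_i\big)e_H\big(\sum_j g_j^{-1}\big)=n\,e_G$ this gives $\sum_{|S|=r}v_Se_H\tilde v_S=\binom{n-2}{r-1}T+n\binom{n-2}{r-2}e_G$, where $T:=\sum_{i=1}^n g_ie_Hg_i^{-1}$. After the Pascal identities $r\binom nr=n\binom{n-1}{r-1}$ and $\binom{n-1}{r-1}=\binom{n-2}{r-1}+\binom{n-2}{r-2}$ this collapses to
\[
\sum_{|S|=r}A_{\Phi_S}^0=\frac{\binom nr}{2}\,e_G-\binom{n-2}{r-1}(1-\rho)e_G+\frac{\binom{n-2}{r-1}}{n}(1-\rho)T,
\]
where each summand on the right is already a class function of $G^c$ (recall $G$ is normal in $G^c$, and $T$ is conjugation-invariant even though $e_H$ is not), so no further averaging is needed, and the expression lies in $H^{00}(G^\CM)$.

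It remains to identify the three pieces. One has $2e_G=\chi_{\Ind_G^{G^c}\mathds{1}}$, the character attached to $\zeta_\kay=\zeta\cdot L(s,\chi_{\kay/\Q})$, while $(1-\rho)e_G$ is the nontrivial character $\chi_{\kay/\Q}$ of $\Gal(\kay/\Q)$. For $T$ I would compute its values directly: for $g\in G$ the coefficient of $g$ in $T$ is $\#\{i:g_i^{-1}gg_i\in H\}$, the value at $g$ of the permutation character of $G$ on $G/H$. Comparing with the character $f_{E/F}$ of $\Ind_{\Gal(E^c/F)}^{G^c}\chi_{E/F}$ — which at $g\in G$ is the same count (since $\chi_{E/F}$ is trivial on $\Gal(E^c/E)$) and at $\rho g$ is its negative (since $\chi_{E/F}(\rho)=-1$) — yields the clean identity $(1-\rho)T=f_{E/F}$; in the dihedral case this recovers Lemma \ref{lem:character2}. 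Since $L(s,f_{E/F})=L(s,\chi_{E/F})$, this produces
\[
Z\Big(0,\sum_{|S|=r}A_{\Phi_S}^0\Big)=\frac{\binom nr}{4}Z(0,\zeta_\kay)-\binom{n-2}{r-1}Z(0,\chi_{\kay/\Q})+\frac{\binom{n-2}{r-1}}{n}Z(0,\chi_{E/F}).
\]

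Finally I would invoke the known instances of Colmez: $h(\mathds{1}_{G^\CM})=\zeta'(0)/\zeta(0)=\log(2\pi)=Z(0,\mathds{1})$, $h(\chi_{\kay/\Q})=Z(0,\chi_{\kay/\Q})$ (the abelian case), and $h(f_{E/F})=Z(0,f_{E/F})$ by Proposition \ref{prop:AverageColmez}. As $2e_G$ corresponds to $\mathds{1}_{G^\CM}\oplus\chi_{\kay/\Q}$, linearity of $h$ shows that $h$ and $Z(0,\cdot)$ agree on each of the three pieces, hence $h\big(\sum_{|S|=r}A_{\Phi_S}^0\big)=Z\big(0,\sum_{|S|=r}A_{\Phi_S}^0\big)$; together with $h_{\Fal}(\Phi_S)=-h(A_{\Phi_S}^0)$ this is precisely statement (2). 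For statement (1): when $|S|=0$ or $n$, $\Phi_S$ is the function attached to the CM type of $\kay$ and Colmez reduces to the classical Chowla--Selberg formula; when $|S|=1$ (or $n-1$), $\Gal(F^c/\Q)$ acts transitively on the $n$ embeddings of $F$, so all $n$ of these CM types are equivalent, whence $h_{\Fal}(\Phi_S)=\tfrac1n\sum_{|S'|=1}h_{\Fal}(\Phi_{S'})$ (and likewise for $h(A^0)$ and $Z(0,A^0)$), and dividing the $r=1$ case of the displayed formula by $n$ gives both the truth of Colmez for $\Phi_S$ and the stated closed form. I expect the main work to be the identity $(1-\rho)T=f_{E/F}$ — i.e.\ correctly pinning down the permutation-module term $T$ — together with tracking the normalizing constants as one passes from the group ring to the $L$-function side.
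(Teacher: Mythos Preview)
Your proposal is correct and follows essentially the same route as the paper's own proof: the same computation of $A_{\Phi_S^c}$ in the group ring of $G^c$, the same combinatorial average over $|S|=r$ using the Pascal identities, the same identification of the ``diagonal'' term $T=\sum_i g_ie_Hg_i^{-1}$ with the permutation character (so that $(1-\rho)T=f_{E/F}$), and the same appeal to Proposition~\ref{prop:AverageColmez}, the abelian case, and the Chowla--Selberg formula for the extremal signatures. The only difference is packaging (your $e_G$, $e_H$, $T$ versus the paper's $\tr_{E^c/\kay}$ and explicit double sums), not substance.
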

\begin{proof}Let $F^c$ be the Galois closure of $F$ over $\Q$ and $E^c= F^c\kay$. Let $G^c=\Gal(E^c/\Q)$, $G=\Gal(E^c/\kay)$, $H^c=\Gal(E^c/F)$, and $H=\Gal(E^c/E)$. Then $G^c=G \times \langle \rho \rangle$ and $H^c=H\times \langle \rho \rangle$ with $\rho$ being the complex conjugation of $E^c$ (also $E$ and $\kay$). For convenience, we identify $\Z/n$ with $T_n=\{1, \cdots, n\}$ in the proof. If we take coset representatives $\{ \sigma_j\}$, then
$$
G= \bigcup_{j=1}^n H \sigma_j= \bigcup_{j=1}^n \sigma_j H,
$$
and   $\{ \sigma_j|E: 1 \le j \le  n\}$ gives the $n$ distinct complex embeddings of $E$ whose restrictions on $\kay$ is the identity map. Then  for a subset $S$ of $T_n$,
$$
\Phi_S= \tr_{E/\kay} + (\rho-1) \sum_{j \in S}  \sigma_j
$$
and its extension to $E^c$ is
$$
\Phi_S^c =\tr_{E^c/\kay} + (\rho-1) \sum_{j \in S, h\in H}  \sigma_j h .
$$
So
\begin{align*}
[E^c:\Q] A_S &= A_S^c \tilde{A}_S^c
\\
 &=( \tr_{E^c/\kay})^2 + (\rho-1) \tr_{E^c/\kay}  \sum_{j \in S, h\in H}  \sigma_j  h   + (\rho-1) \sum_{j \in S, h\in H}  \sigma_j  h\tr_{E^c/\kay}
  \\
   &\qquad + (\rho-1) \sum_{j \in S, h\in H} \sigma_j h \sum_{j \in S, h\in H} h^{-1} \sigma_j^{-1} (\rho-1)
  \\
  &=[E^c:\kay] \tr_{E^c/\kay} -2|S| |H| (1-\rho) \tr_{E^c/\kay} +2|H| (1-\rho)\sum_{h \in H} \sum_{(i, j)\in S^2} \sigma_i h \sigma_j^{-1}.
\end{align*}
So, for $|S|=r$,
\begin{equation}
A_S= \frac{1}2 \tr_{E^c/\kay}- \frac{r}{n} (1-\rho)\tr_{E^c/\kay} + \frac{1}n (1-\rho) \sum_{h \in H} \sum_{(i, j)\in S^2} \sigma_i h \sigma_j^{-1}.
\end{equation}
Let $t_r=\begin{pmatrix} n \\ r \end{pmatrix}$. Then,   for $r \ge 2$,
\begin{align*}
&\sum_{|S|=r} A_s  -\frac{t_r}2 \tr_{E^c/\kay}+ \frac{r t_r }{n} (1-\rho)\tr_{E^c/\kay}
\\
&=\frac{1}{n} (1-\rho) \sum_{h\in H} \sum_{(i, j) \in T_n} \sum_{|S|=r,  (i, j) \in S^2 } \sigma_ih\sigma_j^{-1}
\\
&= \frac{1}{n}\begin{pmatrix} n-2\\ r-2 \end{pmatrix}  (1-\rho) \sum_{h\in H} \sum_{(i, j) \in T_n, i\ne j}  \sigma_i h \sigma_j^{-1}
 + \frac{1}{n}\begin{pmatrix} n-1\\ r-1 \end{pmatrix}  (1-\rho) \sum_{h\in H}\sum_{i\in T_n} \sigma_i h \sigma_i^{-1}
 \\
&=\begin{pmatrix} n-2\\ r-2 \end{pmatrix}  (1-\rho) \tr_{E^c/\kay} + \frac{1}n \left[\begin{pmatrix} n-1\\ r-1 \end{pmatrix} -\begin{pmatrix} n-2\\ r-2 \end{pmatrix}  \right](1-\rho) \sum_{h\in H}\sum_{i\in T_n} \sigma_i h \sigma_i^{-1}
\\
 &=\begin{pmatrix} n-2\\ r-2 \end{pmatrix}  (1-\rho) \tr_{E^c/\kay}
 + \frac{1}{n}\begin{pmatrix} n-2\\ r-1 \end{pmatrix}  (1-\rho) \sum_{h\in H}\sum_{i\in T_n} \sigma_i h \sigma_i^{-1},
\end{align*}
which is already a class function. Here we use the simple fact
$$
\sum_{h\in H} \sum_{(i, j) \in T_n}  \sigma_i h \sigma_j^{-1}
 =n \sum_{h \in H} \sum_{k\in T_n} \sigma_k h =n \tr_{E^c/\kay}.
$$
 So
\begin{align*}
\sum_{|S|=r} A_S^0 = \sum_{|S|=r} A_S&= \frac{1}2 \begin{pmatrix} n\\ r\end{pmatrix} \tr_{E^c/\kay}
  - \begin{pmatrix} n-2\\ r-1 \end{pmatrix} (1-\rho) \tr_{E^c/\kay}
 \\
 &\qquad  +\frac{1}{n}\begin{pmatrix} n-2\\ r-1 \end{pmatrix}  (1-\rho) \sum_{h\in H}\sum_{i\in T_n} \sigma_i h \sigma_i^{-1} .
\end{align*}
Now we check that
\begin{equation} \label{eq6.2}
 (1-\rho) \sum_{h\in H}\sum_{i\in T_n} \sigma_i h \sigma_i^{-1}= \chi_{\Ind_{H^c}^{G^c} \chi_{E/F}}= f_{E/F}
\end{equation}
is the class function defined in Proposition \ref{prop:AverageColmez}. Let $f$ be the class function on the left hand side and  $\pi=\Ind_{H^c}^{G^c} \chi_{E/F}$. Then
$$
f(g) =
  \sum_{h \in H} \sum_{i\in T_n}\delta_{\sigma_i h \sigma_i^{-1}, g} -  \sum_{h \in H} \sum_{i\in T_n}\delta_{ \sigma_i h \sigma_i^{-1},\rho g}.
$$
Here $\delta_{a, b}$ is the Kronecker delta function. On the other hand, $\pi$ has a standard basis $\{u_{\sigma_j}\}$, where
$$
u_{\sigma_j} (\sigma_i)=\delta_{i, j}.
$$
Given $g \in G=\Gal(E^c/\kay)$, $g$ gives a transformation $j \mapsto g(j)$ on $T_n$ via
$$
\sigma_j g = h \sigma_{g(j)}  \hbox{ for some } h \in  H.
$$
Simple calculation  shows
\begin{align*}
\pi(g) u_{\sigma_i} &= u_{\sigma_j} \quad \hbox{if }  g(j) =i.
\end{align*}
 So
$$
\chi_\pi(g) = | \{ i\in T_n:\, \sigma_i g \sigma_i^{-1} \in H\} | =2 |\{(i, h) \in T_n \times H:\, g = \sigma_i h \sigma_i^{-1} \} | =f(g).
$$
On  the other hand,
$$\chi_\pi(\rho g) =\chi_{E/F}(\rho) \chi_\pi(g) =-f(g) =f(\rho g).
$$
In summary, we have verified (\ref{eq6.2}). One has by Proposition \ref{prop:AverageColmez},
$$
\sum_{|S|=r} h_{\Fal} (\Phi_S) =-\sum_{|S|=r} h(A_S^0) =-\sum_{|S|=r}  Z(0,  A_S^0).
$$
This proves (2) for $ r \ge 2$.

 For $r=1$, the same calculations (easier) give
$$
\sum_{|S|=1} A_S^0= \frac{n}2 \tr_{E^c/\kay} -(1-\rho)\tr_{E^c/\kay} + \frac{1}{n} f_{E/F}.
$$
Notice that all CM types of $E$ of signature  $(n-1, 1)$ are equivalent. So for $|S|=1$ one has
$$
h_{\Fal}(\Phi_S) =-Z(0, \Phi_{S}^0) = -\frac{1}4 Z(0, \zeta_\kay) + \frac{1}n Z(0, \chi_{\kay/\Q}) -\frac{1}{n^2} Z(0, \chi_{E/F}).
$$
As a CM type of $E$ of signature $(n-r, r)$ is  equivalent to a CM type of signature $(r, n-r)$, the case $|S|=n-1$ is also true.

Then case $r=0$ (also $r=n$) is a restatement of the well-known Chowla-Selberg formula. Indeed, $\Phi_S =\tr_{E/\kay}$ is in this case the extension of the CM type $\{ 1\}$ of $\kay$ to $E$. So
$$
h_{\Fal}(\Phi_S) = h_{\Fal}(\Phi_{\kay, \{ 1\}})=\frac{1}2 h_{\Fal}(A)=-\frac{1}4 Z(0, \zeta_\kay)
$$
by the Chowla-Selberg formula (\cite[(0.15)]{KRY1}), where $\Phi_{\kay, \{1 \}}$ is the CM type $\{1 \}$ of $\kay$, and $A$ is a CM elliptic curve with CM by $\OO_\kay$.
\end{proof}

\begin{corollary} \label{cor:SymmetricGroup}  Let $F^c$ be the Galois closure of $F$ over  $\Q$ and assume that $\Gal(F^c/\Q) \cong S_n$  or $A_n$ is the symmetric group  or the alternating group of order $n$ with $n=[F:\Q]$. Then the Colmez conjecture holds for every CM type of $E=F\kay$, where $\kay$ is an imaginary quadratic field.
\end{corollary}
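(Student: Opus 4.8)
The plan is to derive the corollary from Theorem~\ref{theo:Average} by combining its averaged statement~(2) with the observation that, under the hypothesis on $\Gal(F^c/\Q)$, all CM types of a given signature $(n-r,r)$ become equivalent. First I would record the two invariance facts on which this rests: the Faltings height $h_{\Fal}(\Phi)$ and the class function $A_\Phi^0$ --- hence also $Z(0,A_\Phi^0)$ --- depend only on the $\Gal(\bar\Q/\Q)$-equivalence class of the CM type $\Phi$. The invariance of $h_{\Fal}$ is already stated in Section~\ref{sect:ColmezReview}; for $A_\Phi^0$ one notes that $A_{g\Phi}=gA_\Phi g^{-1}$ as elements of the group ring (because $\widetilde{g\Phi}=\widetilde\Phi\,g^{-1}$), and conjugation commutes with the projection $f\mapsto f^0$ onto class functions, so $A_{g\Phi}^0=A_\Phi^0$. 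Thus both sides of the Colmez identity~(\ref{eq:Colmez2}) are constant on equivalence classes.

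Next I would make the equivalence classes of the $\Phi_S$ explicit. With the notation of the proof of Theorem~\ref{theo:Average}, $G=\Gal(E^c/\kay)\cong\Gal(F^c/\Q)$ acts on the $n$ embeddings $\{\sigma_j|E\}$ of $E$ exactly as $\Gal(F^c/\Q)$ acts (faithfully and transitively) on the $n$ embeddings of $F$, and $g\cdot\Phi_S=\Phi_{g\cdot S}$ for $g\in G$ (with $\rho\cdot\Phi_S=\Phi_{\bar S}$, which we will not even need). Hence if $G$ acts transitively on the $r$-element subsets of $\{1,\dots,n\}$, then all CM types of $E$ of signature $(n-r,r)$ lie in a single equivalence class. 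The hypothesis says that $\Gal(F^c/\Q)$, as a transitive subgroup of $S_n$ via this action, equals $S_n$ or $A_n$. For $S_n$, transitivity on $r$-subsets is obvious. For $A_n$ and $2\le r\le n-2$, the stabilizer in $S_n$ of an $r$-subset is $S_r\times S_{n-r}$, which contains a transposition because $r\ge 2$; hence it is not contained in $A_n$, so $A_n\cdot(S_r\times S_{n-r})=S_n$ and the $A_n$-orbit of any $r$-subset already consists of all $\binom{n}{r}$ of them. (If $n\le 3$ there is no $r$ with $2\le r\le n-2$, and this step is vacuous.)

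Finally I would assemble the conclusion. Fix $r$ with $2\le r\le n-2$. By the previous paragraph the $\binom{n}{r}$ CM types $\Phi_S$ with $|S|=r$ are mutually equivalent, so $h_{\Fal}(\Phi_S)$ takes a single value $v_r$ and $Z(0,A_S^0)$ a single value $w_r$, independent of $S$. Theorem~\ref{theo:Average}(2) gives $\sum_{|S|=r}h_{\Fal}(\Phi_S)=-\sum_{|S|=r}Z(0,A_S^0)$, i.e.\ $\binom{n}{r}v_r=-\binom{n}{r}w_r$, whence $v_r=-w_r$; that is precisely the Colmez conjecture~(\ref{eq:Colmez2}) for every $\Phi_S$ of signature $(n-r,r)$. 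For the remaining signatures $r\in\{0,1,n-1,n\}$ the conjecture holds by Theorem~\ref{theo:Average}(1). Since every CM type of $E$ equals $\Phi_S$ for some $S\subseteq\{1,\dots,n\}$, the Colmez conjecture holds for all CM types of $E$, proving the corollary. The argument has no genuine obstacle: the only points requiring care are the elementary transitivity of $A_n$ on $r$-subsets and the bookkeeping that, once all relevant CM types are equivalent, the averaged identity of Theorem~\ref{theo:Average}(2) collapses term by term into the individual Colmez identities.
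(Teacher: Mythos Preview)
Your proof is correct and follows essentially the same route as the paper's own argument: reduce to $2\le r\le n-2$ via Theorem~\ref{theo:Average}(1), observe that $S_n$ and $A_n$ act transitively on $r$-subsets so that all CM types of a given signature are equivalent, and then let the averaged identity of Theorem~\ref{theo:Average}(2) collapse to the individual Colmez identity. You supply more detail than the paper does---the explicit invariance $A_{g\Phi}^0=A_\Phi^0$ and the stabilizer argument for $A_n$-transitivity---but the strategy is identical.
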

\begin{proof} Let $\Phi=\Phi_S$ be a CM type of $E$ with $|S|=r$ as above. By Theorem \ref{theo:Average}, we may assume  $2 \le r \le n/2$.    In this case,  notice that $S_n$ and $A_n$ act transitively on the subsets $S$ of order $r$ of $T_n=\{1, \cdots, n\}$. So $\Gal(F^c/\Q)$ acts on the CM types $\Phi_S$ of $E$ of signature $(n-r, r)$ transitively.  Theorem \ref{theo:Average} implies that the Colmez conjecture holds for each of them.

\end{proof}

\subsection{CM number fields of small degree}  It is well-known and easy to check that the Colmez conjecture only depends on the Galois orbit of a CM type.
Fresan observed that the average Colmez conjecture  and the Colmez conjecture for abelian CM number fields (both are theorems now) imply that every quartic CM number field satisfies the Colmez conjecture. He and one of the authors (T.Y.)  showed using the same idea  in a private note  that every sextic CM number field satisfies the Colmez conjecture with the help of Dodson's classification of sextic  CM number fields and their CM types (\cite{Dodson}). We consider the case $n=5$ here.  Let $E$ be a CM number field  with maximal totally real number field $F$. Let $E^c$ be the Galois closure of $E$, and let $F^c$ be the Galois closure of $F$.  Then  $E^c$ is a CM number field and $F^c$ is a totally real subfield (which may be not maximal). Let $G= \Gal(E^c/\Q)$ as above and let $G_0 =\Gal(F^c/\Q)$  as in \cite{Dodson} (this notation  is a little different from the rest of this paper). Assume that $F=\Q(\alpha)$, and let $\alpha_1=\alpha, \alpha_2, \cdots \alpha_n$ be the Galois  conjugates of $\alpha$. Then the action of $G_0$ on these roots gives an embedding of $G_0$ into $S_n$. The following is a special case of  \cite[5.1.3]{Dodson}.

 \begin{proposition} \label{prop:Dodson}  Let the notation be as above and assume $n=[F:\Q]=5$. Then  the possible Galois groups $G$ are:
 \begin{enumerate}
 \item $G=G_0 \times \langle \rho \rangle $ with $G_0\cong  \Z/5, D_{10}, \Z/5 \rtimes \Z/4, A_5$ or $S_5$.  Here
 $$
 \Z/5 \rtimes \Z/4 \cong \langle (12345), (2354)  \rangle \subset S_5.
 $$

 \item $G =(\Z/2)^5 \rtimes G_0$. Here $G_0$  is again one of the five choices listed in (1),  and it acts on $(\Z/2)^5$ by permutation of the coordinates.   In this case, all CM types of $E$ are equivalent.
 \end{enumerate}
 \end{proposition}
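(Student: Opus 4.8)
This proposition is the $n=5$ case of Dodson's classification \cite[5.1.3]{Dodson}; the plan below reconstructs it directly, which is feasible because the degree is small.

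\emph{Step 1: embed $G$ into a hyperoctahedral group.} Since $F$ is totally real, $F^c$ is totally real, so the complex conjugation $\rho$ of $E^c$ fixes $F^c$; and since $E^c$ is a CM field, $\rho$ lies in the centre of $G=\Gal(E^c/\Q)$. The $2n=10$ embeddings of $E$ into $E^c$ fall into $n=5$ complex-conjugate pairs, and $G$ permutes these pairs while possibly interchanging the two members of a pair. This gives an injection $G\hookrightarrow C_2\wr S_5=(\Z/2)^5\rtimes S_5$ under which $\rho\mapsto(\mathbf{1},e)$, the all-ones vector with trivial permutation part; the composite $G\to S_5$ has image $G_0=\Gal(F^c/\Q)$, acting on the five conjugates of a primitive element of $F$ as in the paragraph preceding the proposition. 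By the classical classification of transitive subgroups of $S_5$, $G_0$ is (up to conjugacy) one of $\Z/5$, $D_{10}$, $\Z/5\rtimes\Z/4$, $A_5$, $S_5$.

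\emph{Step 2: pin down $N=\ker(G\to S_5)=G\cap(\Z/2)^5$.} This $N$ is an $\F_2[G_0]$-submodule of the permutation module $\F_2^5$, and it contains $\mathbf{1}$ because $\rho\in N$. The crux is that, since $5$ is odd, $\F_2^5=\langle\mathbf{1}\rangle\oplus V_0$ as $\F_2[G_0]$-modules, where $V_0=\{v:\sum_i v_i=0\}$; and since $2$ has order $4$ modulo $5$, the module $V_0$ is already irreducible over $\F_2[\Z/5]$ (a copy of $\F_{16}$), hence irreducible over $\F_2[G_0]$ for every transitive $G_0$, as each such group contains a $5$-cycle. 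Therefore the $\F_2[G_0]$-submodules of $\F_2^5$ are exactly $0$, $\langle\mathbf{1}\rangle$, $V_0$, $\F_2^5$, and those containing $\mathbf{1}$ are $\langle\mathbf{1}\rangle$ and $\F_2^5$; so either $N=\langle\rho\rangle$ or $N=(\Z/2)^5$. This module step is where I expect the main work to lie: the whole classification collapses to these two cases precisely because $V_0$ has no proper nonzero submodule, and this rests on the two numerical facts special to $n=5$ ($5$ odd, $\mathrm{ord}_5(2)=4$); for general $n$ this is where Dodson's finer analysis enters.

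\emph{Step 3: conclude.} If $N=\langle\rho\rangle$, I would use the total-sign homomorphism $s\colon C_2\wr S_5\to\Z/2$, $s(v,\tau)=\sum_i v_i$ (well defined since $\tau$ permutes coordinates); as $s(\rho)=5\equiv1$, the subgroup $K=\ker(s|_G)$ has index $2$ in $G$, meets $\langle\rho\rangle$ trivially, and projects isomorphically onto $G_0$, whence $G=K\times\langle\rho\rangle\cong G_0\times\langle\rho\rangle$ — this gives case (1) and simultaneously shows the relevant central extension splits. If $N=(\Z/2)^5$, then $G$ must be the entire preimage of $G_0$ in $(\Z/2)^5\rtimes S_5$, i.e.\ $G=(\Z/2)^5\rtimes G_0$, which is case (2); and here $(\Z/2)^5\le G$ acts on the $2^5$ CM types of $E$ by independently reversing the chosen embedding in each conjugate pair, hence transitively, so all CM types of $E$ are equivalent. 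That every group in the list is actually realized can be read off from explicit degree-$10$ CM fields, but only the enumeration of possibilities is needed here.
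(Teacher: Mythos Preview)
Your argument is correct and self-contained. The paper itself does not prove this proposition: it simply records it as ``a special case of \cite[5.1.3]{Dodson}'' and moves on, so you are supplying strictly more than the paper does.

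The point of comparison is your Step~2 versus Dodson's route. Dodson reaches the dichotomy $\nu\in\{1,5\}$ (in the language of the exact sequence $1\to(\Z/2)^\nu\to G\to G_0\to 1$) via the divisibility criterion $p\mid 2^{\nu-1}-1$ for prime degree $p$, which for $p=5$ forces $\nu-1\in\{0,4\}$. You replace this by a direct module-theoretic observation: the permutation module $\F_2^{\,5}$ decomposes as $\langle\mathbf{1}\rangle\oplus V_0$ with $V_0$ irreducible over $\F_2[\Z/5]$ because $\ord_5(2)=4$, so the only $G_0$-submodules containing $\mathbf{1}$ are $\langle\mathbf{1}\rangle$ and $\F_2^{\,5}$. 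Both arguments ultimately rest on the same arithmetic fact (that $2$ is a primitive root modulo $5$), but your formulation makes the mechanism transparent and identifies exactly where the prime $5$ enters. The splitting in Step~3 via the total-sign homomorphism $s(v,\tau)=\sum_i v_i$ is also a clean device: since $s(\rho)=5\equiv 1$ and $\rho$ is central, it disposes of the extension problem in case~(1) without any cocycle computation, and the transitivity of $(\Z/2)^5$ on CM types in case~(2) is immediate from your description of the wreath-product action.
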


\begin{theorem}\label{theo:Degree5} The Colmez conjecture (\ref{eq:Colmez2}) holds for all CM number fields of degree $10$ with  the following  possible exception: $E=F \kay$ where $F$ is a totally real number field of degree $5$ whose Galois closure $F^c$ is of type  $D_{10}$ over $\Q$ and $\kay$ is an imaginary quadratic field. In such a case, the Colmez conjecture (\ref{eq:Colmez2}) holds for $\Phi_S$ with $|S|=0, 1,  4, 5$. Moreover, every $\Phi_S$ with $|S|=2, 3$ is equivalent to $\Phi_{\{1, 2\} }$ or $\Phi_{\{1, 3\}}$. Finally, the Colmez conjecture (\ref{eq:Colmez}) holds for $A_{\{1, 2\}}^0+ A_{\{1, 3\} }^0$ in this case.
\end{theorem}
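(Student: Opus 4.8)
The plan is to run through the classification of the Galois group $G=\Gal(E^c/\Q)$ of a degree-$10$ CM field $E$ furnished by Proposition~\ref{prop:Dodson}, settle every case except the dihedral one using instances of the Colmez conjecture already established, and then deal with the dihedral case via the average Colmez conjecture. Throughout I use that the validity of (\ref{eq:Colmez2}) for $\Phi$ depends only on the $\Gal(\bar\Q/\Q)$-orbit of $\Phi$.

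\textbf{Fields with enough symmetry.} Suppose first $G=(\Z/2)^5\rtimes G_0$ as in Proposition~\ref{prop:Dodson}(2), so all $2^5$ CM types of $E$ are equivalent and the class functions $A_\Phi^0$ all agree. Then the averaging identity recalled in the proof of Proposition~\ref{prop:AverageColmez} forces $A_\Phi^0=\frac14\mathds{1}_{G^\CM}+\frac1{20}f_{E/F}$; since the Colmez conjecture holds for the trivial character $\mathds{1}_{G^\CM}$ (both sides being $\zeta'(0)/\zeta(0)$) and for $f_{E/F}$ by Proposition~\ref{prop:AverageColmez}, linearity together with $(A_\Phi^0)^\vee=A_\Phi^0$ gives (\ref{eq:Colmez2}). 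Now let $G=G_0\times\langle\rho\rangle$ as in Proposition~\ref{prop:Dodson}(1). If $G_0\cong\Z/5$ then $G\cong\Z/10$, so $E=E^c$ is abelian over $\Q$ and (\ref{eq:Colmez2}) is the theorem of Colmez and Obus. If $G_0\cong\Z/5\rtimes\Z/4$, $A_5$, or $S_5$, I would check (using $[E^c:F^c]=2$, so $E^c=F^c\kay_0$ for an imaginary quadratic $\kay_0$, together with an inspection of the index-$2$ subgroups of $G$) that $E=F\kay$ for some imaginary quadratic $\kay\subset E^c$, with $\Gal(F^c/\Q)\cong G_0$ acting on the $5$ real places of $F$; each such $G_0$ is $2$-transitive on $5$ points, hence transitive on their $r$-subsets for $r=2$ and, by complementation, for $r=3$. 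Thus for each signature all CM types of $E$ are equivalent, so Theorem~\ref{theo:Average} applies: $r=0,1,4,5$ immediately, and for $r=2,3$ the averaged identity of Theorem~\ref{theo:Average}(2) becomes the statement for a single class (for $A_5,S_5$ this is Corollary~\ref{cor:SymmetricGroup}). Only $G_0\cong D_{10}$ remains.

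\textbf{The dihedral case.} Here $E=F\kay$ with $F^c$ of type $D_{10}$ over $\Q$ and $n=5$, so Theorem~\ref{theo:Average}(1) gives (\ref{eq:Colmez2}) for $\Phi_S$ with $|S|=0,1,4,5$. By Lemma~\ref{lem3.2} every signature-$(3,2)$ CM type is equivalent to $\Phi_{\{0,1\}}$ or $\Phi_{\{0,2\}}$; translating by $\sigma$ gives $\Phi_{\{0,1\}}\cong\Phi_{\{1,2\}}$ and $\Phi_{\{0,2\}}\cong\Phi_{\{1,3\}}$, and by Lemma~\ref{lem3.2}(1) the signature-$(2,3)$ CM types are equivalent to these two as well. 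Hence the $\binom{5}{2}=10$ subsets of $\Z/5$ of order $2$ split into exactly two equivalence classes, each occurring $5$ times (the pairs of cyclic distance $1$ and of cyclic distance $2$). Since $h_{\Fal}(\Phi_S)$ and $A_S^0$ depend only on the equivalence class, Theorem~\ref{theo:Average}(2) with $r=2$ reads
$$
5\bigl(h_{\Fal}(\Phi_{\{1,2\}})+h_{\Fal}(\Phi_{\{1,3\}})\bigr)=-5\bigl(Z(0,A_{\{1,2\}}^0)+Z(0,A_{\{1,3\}}^0)\bigr).
$$
Dividing by $5$, applying $h_{\Fal}(\Phi)=-h(A_\Phi^0)$ of (\ref{eq:Colmez1}) together with the linearity of $h$ and of $Z(0,\cdot)$, and using $(A_{\{1,2\}}^0+A_{\{1,3\}}^0)^\vee=A_{\{1,2\}}^0+A_{\{1,3\}}^0$, one gets $h(A_{\{1,2\}}^0+A_{\{1,3\}}^0)=Z(0,A_{\{1,2\}}^0+A_{\{1,3\}}^0)$, which is precisely the Colmez conjecture (\ref{eq:Colmez}) for the class function $A_{\{1,2\}}^0+A_{\{1,3\}}^0$.

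\textbf{The main obstacle.} The substance of the argument is the bookkeeping: confirming that Proposition~\ref{prop:Dodson} exhausts the degree-$10$ CM fields, and, in the three non-dihedral cases of Proposition~\ref{prop:Dodson}(1) with $|G_0|>10$, verifying that $E$ is genuinely of the form $F\kay$ so that the $2$-transitivity of $G_0$ and Theorem~\ref{theo:Average} can be invoked. Granting that, the dihedral field is the unique one whose signature-$(3,2)$ CM types form more than a single orbit, and the stated identity for $A_{\{1,2\}}^0+A_{\{1,3\}}^0$ drops out of the average Colmez conjecture with nothing further needed.
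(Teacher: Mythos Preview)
Your proof is correct and follows essentially the same path as the paper: classify via Proposition~\ref{prop:Dodson}, dispose of case~(2) by the average Colmez conjecture, handle the abelian and $2$-transitive subcases of case~(1) using Theorem~\ref{theo:Average}, and in the remaining $D_{10}$ subcase read off the three assertions from Theorem~\ref{theo:Average}(1), Lemma~\ref{lem3.2}, and Theorem~\ref{theo:Average}(2). Your write-up is in fact a bit more explicit than the paper's in two places---you invoke $2$-transitivity of $\Z/5\rtimes\Z/4$, $A_5$, $S_5$ directly rather than exhibiting a single $\tau$ with $\tau(1)=1$, $\tau(2)=3$, and you spell out the $5+5$ orbit count in the dihedral case---but these are the same arguments; the point you flag about checking $E=F\kay$ in the non-dihedral subcases of case~(1) is likewise asserted without proof in the paper.
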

\begin{proof} It is a simple case by case verification using Proposition \ref{prop:Dodson}. In case (2), the average Colmez conjecture proved in  \cite{AGHMP} and \cite{YZ15} implies the Colmez conjecture as all the CM types of $E$ are equivalent. In case (1), there is an imaginary quadratic subfield $\kay$ of $E^c$ such that $\Gal(E^c/\kay) \cong \Gal(F^c/\Q)$, and $E=F\kay$. So every CM type of $E$ has the form $\Phi_S$ with $S \subset  T_5=\{1, 2, 3, 4, 5\}$. The Colmez conjecture holds for $\Phi_S$ for $|S|=0, 1, 3, 4$ by Theorem \ref{theo:Average}.

Now we assume that   $|S|=2$. The abelian case $G_0=\Z/5$ is known due to Colmez and Obus. In all other cases, $\Phi_S$ is clearly equivalent to some $\Phi_{\{1, i\}}$,  $i=2, 3$. In cases $G_0=A_5, S_5$ or $\Z/5 \rtimes \Z/4$, there is $\tau \in G_0$ such that $\tau(1)=1$ and $\tau(2) =3$, and so $\tau \Phi_{\{ 1, 2\}}= \Phi_{\{1, 3\}} $. So all $\Phi_S$ with $|S|=2$ are equivalent, and Theorem \ref{theo:Average} implies the Colmez conjecture for all $\Phi_S$ with $|S|=2$. The same is true for $|S|=3$ as $\Phi_S $ is equivalent to $\Phi_{\bar S}$.
 We are left with $G=G_0 \times \langle \rho \rangle $, $G_0=D_{10}$. Theorem \ref{theo:Dihedral} takes care of this case.
\end{proof}

\bibliographystyle{alpha}
\bibliography{reference}

\newcommand{\etalchar}[1]{$^{#1}$}
\def\cprime{$'$}
\begin{thebibliography}{AGHMP}

\bibitem[AGHMP]{AGHMP}
F.~Andreatta, E.~Goren, B.~Howard, and K.~Medapusi~Pera.
\newblock Faltings heights of abelian varieties with complex multiplication.
\newblock {\em preprint, 2016, pp 129}.

\bibitem[BHK{\etalchar{+}}]{BHKRY}
J.~Bruinier, B.~Howard, S.~Kudla, M.~Rapoport, and T.H. Yang.
\newblock Modularity of generating series of divisors on unitary shimura
  varieties ii: arithmetic applications.
\newblock {\em preprint, 2017}.

\bibitem[BSM]{BSM}
A.~Barquero-Sanchez and R.~Masri.
\newblock On the {C}olmez conjecture for non-abelian {CM} fields.
\newblock {\em preprint, 2016, pp 35}.

\bibitem[Col93]{Colmez}
Pierre Colmez.
\newblock P\'eriodes des vari\'et\'es ab\'eliennes \`a multiplication complexe.
\newblock {\em Ann. of Math. (2)}, 138(3):625--683, 1993.

\bibitem[Dod84]{Dodson}
B.~Dodson.
\newblock The structure of {G}alois groups of {${\rm CM}$}-fields.
\newblock {\em Trans. Amer. Math. Soc.}, 283(1):1--32, 1984.

\bibitem[KRY99]{KRY1}
Stephen~S. Kudla, Michael Rapoport, and Tonghai Yang.
\newblock On the derivative of an {E}isenstein series of weight one.
\newblock {\em Internat. Math. Res. Notices}, (7):347--385, 1999.

\bibitem[Obu13]{Obus}
Andrew Obus.
\newblock On {C}olmez's product formula for periods of {CM}-abelian varieties.
\newblock {\em Math. Ann.}, 356(2):401--418, 2013.

\bibitem[Tat84]{Tatebook}
John Tate.
\newblock {\em Les conjectures de {S}tark sur les fonctions {$L$} d'{A}rtin en
  {$s=0$}}, volume~47 of {\em Progress in Mathematics}.
\newblock Birkh\"auser Boston, Inc., Boston, MA, 1984.
\newblock Lecture notes edited by Dominique Bernardi and Norbert Schappacher.

\bibitem[Tsi]{Tsimerman}
J.~Tsimerman.
\newblock A proof of the {A}ndre-{O}ort conjecture for ${A}_g$.
\newblock {\em preprint}.

\bibitem[Yan10a]{Yang10-AJM}
Tonghai Yang.
\newblock An arithmetic intersection formula on {H}ilbert modular surfaces.
\newblock {\em Amer. J. Math.}, 132(5):1275--1309, 2010.

\bibitem[Yan10b]{Yang10-CJM}
Tonghai Yang.
\newblock The {C}howla-{S}elberg formula and the {C}olmez conjecture.
\newblock {\em Canad. J. Math.}, 62(2):456--472, 2010.

\bibitem[Yan13]{YaGeneral}
Tonghai Yang.
\newblock Arithmetic intersection on a {H}ilbert modular surface and the
  {F}altings height.
\newblock {\em Asian J. Math.}, 17(2):335--381, 2013.

\bibitem[YZ]{YZ15}
Xinyi Yuan and Shou-Wu Zhang.
\newblock On the average {C}olmez conjecture.
\newblock {\em preprint}.

\end{thebibliography}

\end{document}